\documentclass[smallextended]{svjour3}       
\pdfoutput=1
\smartqed  

\usepackage[export]{adjustbox}
\usepackage{cleveref}
\usepackage{graphicx}

\usepackage{amsfonts,amsmath,amssymb,bm}
\usepackage{booktabs,multirow}
\usepackage{epstopdf}
\usepackage[caption=false]{subfig} 
\usepackage{algorithm}
\usepackage{algorithmic}
\usepackage{hyperref}

\makeatletter
\def\cl@chapter{}
\makeatother

\newtheorem{rem}{Remark}
\crefname{rem}{Remark}{Remark} 
\crefname{section}{Section}{Section} 
\crefname{subsection}{Subsection}{Subsection} 

\usepackage{amsopn}

\DeclareMathOperator{\Diag}{Diag}
\DeclareMathOperator{\dom}{dom}
\DeclareMathOperator{\hard}{hard}
\DeclareMathOperator{\mini}{minimize}
\DeclareMathOperator{\amin}{argmin}
\DeclareMathOperator{\Amin}{Argmin}
\DeclareMathOperator{\soft}{soft}
\DeclareMathOperator{\trace}{trace}
\DeclareMathOperator{\vect}{vect}
\DeclareMathOperator{\prox}{prox}
\DeclareMathOperator{\proj}{proj}

\DeclareMathOperator{\diff}{d}

\newcommand{\rd}{\mathbb{R}}

\newcommand{\sm}{\mathcal{S}_n}
\newcommand{\smsp}{\mathcal{S}_n^{+}}
\newcommand{\smp}{\mathcal{S}_n^{++}}

\newcommand{\Sp}[2]{\mathcal{R}_{#1}\lp {#2}\rp}
\newcommand{\tr}[1]{\trace\lp {#1}\rp}
\newcommand{\minimize}[1]{\ensuremath{\underset{\substack{{#1}}}{\mini}}\,\,}
\newcommand{\argmin}[1]{\ensuremath{\underset{\substack{{#1}}}{\amin}}\,\,}
\newcommand{\Argmin}[1]{\ensuremath{\underset{\substack{{#1}}}{\Amin}}\,\,}

\def\lp{\left(}
\def\rp{\right)}

\def\lg{\left\{}
\def\rg{\right\}}

\newcommand{\Id}{\textnormal{I}_\textnormal{d}}

\newcommand{\vectorize}[1]{\mathbf{#1}}
\newcommand{\va}{\vectorize{a}}
\newcommand{\vb}{\vectorize{b}}

\newcommand{\vd}{\vectorize{d}}
\newcommand{\ve}{\vectorize{e}}

\newcommand{\vs}{\vectorize{s}}
\newcommand{\vt}{\vectorize{t}}
\newcommand{\vx}{\vectorize{x}}
\newcommand{\vy}{\vectorize{y}}

\newcommand{\vA}{\vectorize{A}}
\newcommand{\vB}{\vectorize{B}}
\newcommand{\vC}{\vectorize{C}}

\newcommand{\vM}{\vectorize{M}}
\newcommand{\vN}{\vectorize{N}}
\newcommand{\vP}{\vectorize{P}}
\newcommand{\vQ}{\vectorize{Q}}

\newcommand{\vS}{\vectorize{S}}
\newcommand{\vT}{\vectorize{T}}
\newcommand{\vU}{\vectorize{U}}

\newcommand{\vX}{\vectorize{X}}
\newcommand{\vY}{\vectorize{Y}}

\newcommand{\iter}[3]{#1_{#2}^{#3}}


\let\orgautoref\autoref
\providecommand{\Autoref}
        {\def\equationautorefname{Equation}%
         \def\figureautorefname{Figure}%
         \def\subfigureautorefname{Figure}%
         \renewcommand\sectionautorefname{Section}%
         \def\subsectionautorefname{Section}%
         \def\subsubsectionautorefname{Section}%
         \def\Itemautorefname{Item}%
         \def\tableautorefname{Table}%
         \orgautoref}

\begin{document}

\title{A Proximal Approach for a Class of Matrix Optimization Problems}


\author{A. Benfenati$^\dagger$ \and E. Chouzenoux$^{\dagger,\ddagger}$ \\
\and J.--C. Pesquet$^\ddagger$
}


\institute{$\dagger$ $\quad$ Laboratoire d'Informatique Gaspard Monge, UMR CNRS 8049, University Paris-Est Marne-la-Vall\'ee, \email{falessandro.benfenati@esiee.fr}\\    
           $\ddagger$ $\quad$ Center for Visual Computing, INRIA Saclay and CentraleSup\'elec, University Paris-Saclay, 
              \email{emilie.chouzenoux@centralesupelec.fr}, \email{jean-christophe@pesquet.eu}
}

\date{Received: date / Accepted: date}

\maketitle

\begin{abstract}
In recent years, there has been a growing interest in mathematical models leading to the minimization, in a symmetric matrix space, of  a Bregman divergence coupled with a regularization term. We address problems of this type within a general framework  where the regularization term is split in two parts, one being a spectral function while the other is arbitrary. A Douglas--Rachford approach is proposed to address such problems and a list of proximity operators is provided allowing us to consider various choices for the fit--to--data functional and for the regularization term. Numerical experiments show the validity of this approach for solving convex optimization problems encountered in the context of sparse covariance matrix estimation. Based on our theoretical results, an algorithm is also proposed for noisy graphical lasso where a precision matrix has to be estimated in the presence of noise. The nonconvexity of the resulting objective function is dealt with a majorization--minimization approach, i.e. by building a sequence of convex surrogates and solving the inner optimization subproblems via the aforementioned  Douglas--Rachford procedure. We establish conditions for the convergence of this iterative scheme and we illustrate its good numerical performance with respect to state--of--the--art approaches.
\keywords{  Covariance estimation \and graphical lasso \and matrix optimization \and Douglas-Rachford method \and majorization-minimization \and Bregman divergence 
}
\subclass{15A18\and 15B48\and  62J10\and  65K10\and  90C06\and  90C25\and  90C26\and  90C35}
\begin{acknowledgements}
This work was funded by the Agence Nationale de la Recherche under grant ANR-14-CE27-0001 GRAPHSIP.
\end{acknowledgements}
\end{abstract}

\section{Introduction}
\label{sec:intro}
In recent years, various applications such as shape classification models \cite{DBLP:conf/uai/2008}, gene expression \cite{Ma:2013:ADM:2494250.2494257}, model selection \cite{Banerjee:2008:MST:1390681.1390696,chandrasekaran2012}, computer vision \cite{doi:10.1093/biomet/asq060}, inverse covariance estimation \cite{Friedman07,Dempster72,Yuan09,Aspremont08,doi:10.1137/090772514}, graph estimation \cite{meinshausen2006,ravikumar2011,doi:10.1093/biomet/asm018}, social network and corporate inter-relationships analysis \cite{Aslan2016}, or brain network analysis \cite{doi:10.1137/130936397} have led to matrix variational formulations of the form:
\begin{equation}
\label{eq:prob}
\minimize{\vC\in\mathcal{S}_n}{f(\vC) -\tr{\vT\vC} + g(\vC)},
\end{equation}
where $\mathcal{S}_n$ is the set of real symmetric matrices of dimension $n\times n$, 
$\vT$ is a given $n\times n$ real matrix (without loss of generality, it will be assumed to be symmetric), 
and $f\colon \mathcal{S}_n \to ]-\infty,+\infty]$ and $g\colon \colon \mathcal{S}_n \to ]-\infty,+\infty]$ are lower-semicontinuous functions which are proper, in the sense that they are finite at
least in one point.\\
It is worth noticing that the notion of Bregman divergence \cite{bregman1967} gives a particular insight into Problem \eqref{eq:prob}. Indeed, suppose that $f$ is a convex function
differentiable on the interior of its domain $\operatorname{int}(\dom f)\neq \varnothing$.  Let us recall that, in $\mathcal{S}_n$ endowed with the Frobenius norm, the $f$-Bregman divergence between $\vC\in \mathcal{S}_n$ and $\vY\in \operatorname{int}(\dom f)$ is
\begin{equation}
\label{eq:breg0}
D^f(\vC,\vY) = f(\vC)-f(\vY)-\tr{\vT(\vC-\vY)},
\end{equation}
where $\vT = \nabla f(\vY)$ is the gradient of $f$ at $\vY$.
Hence, the original problem \eqref{eq:prob} is equivalently expressed as
\begin{equation}
\label{eq:breg}
\minimize{\vC\in\mathcal{S}_n} g(\vC)+ D^f(\vC,\vY).
\end{equation}
Solving Problem \eqref{eq:breg} amounts to computing the proximity operator of $g$ at $\vY$ with respect to the divergence $D^f$ \cite{Bauschke03,Bauschke06}
in the space $\mathcal{S}_n$. In the vector case, such kind of proximity operator has been found to be useful in a number of recent works regarding, for example, image restoration \cite{Brune2011,Benfenati13,Benfenati2015,doi:10.1137/090746379}, image reconstruction \cite{Zhang2011}, and compressive sensing problems \cite{Yin08,doi:10.1137/080725891}.\\
In this paper, it will be assumed that $f$ belongs to the class of \emph{spectral functions} \cite[Chapter 5, Section 2]{borwein2014}, i.e., for every permutation
matrix $\mathbf{\Sigma} \in \mathbb{R}^{n\times n}$,
\begin{equation}
\label{eq:fphi}
(\forall \vC \in \mathcal{S}_n) \quad f(\vC) =  \varphi(\mathbf{\Sigma}\vd),
\end{equation}
where $\varphi\colon \mathbb{R}^n \to ]-\infty,+\infty]$ is a proper lower semi-continuous convex function and
$\vd$ is a vector of eigenvalues of $\vC$.\\
Due to the nature of the problems, in many of the aforementioned applications, $g$ is a regularization function  promoting the sparsity of $\vC$. 
We  consider here a more generic class of regularization functions obtained by decomposing $g$ as $g_0 + g_1$,  where $g_0$ is a spectral function, i.e.,
for every permutation matrix $\mathbf{\Sigma} \in \mathbb{R}^{n\times n}$,
\begin{equation}
(\forall \vC \in \mathcal{S}_n) \quad g_0(\vC) =  \psi(\mathbf{\Sigma}\vd),
\label{eq:g0}
\end{equation}
with $\psi\colon \mathbb{R}^n \to ]-\infty,+\infty]$ a proper lower semi--continuous function, 
$\vd$ still denoting a vector of the eigenvalues of $\vC$,
while $g_1 \colon \mathcal{S}_n \to ]-\infty,+\infty]$ is a proper lower semi--continuous function which cannot be expressed under a spectral form. 
A very popular and useful example encompassed by our framework is the graphical lasso (GLASSO) problem, 
where $f$ is the minus log-determinant function, $g_1$ is a component--wise $\ell_1$ norm  (of the matrix elements), and $g_0 \equiv 0$. 
Various algorithms have been proposed to solve Problem \eqref{eq:prob} in this context, including the popular GLASSO algorithm \cite{Friedman07} and some of its recent variants \cite{mazumder2012}. We can also mention the dual block coordinate ascent method from \cite{Banerjee:2008:MST:1390681.1390696}, the SPICE algorithm \cite{rothman2008}, the gradient projection method in \cite{DBLP:conf/uai/2008}, the Refitted CLIME algorithm \cite{doi:10.1198/jasa.2011.tm10155}, various algorithms \cite{Aspremont08,doi:10.1137/070695915,doi:10.1137/080742531} based on Nesterov's smooth gradient approach \cite{Nesterov2005}, ADMM approaches \cite{Yuan09,NIPS2010_0109}, an inexact Newton method \cite{doi:10.1137/090772514}, and interior point methods \cite{doi:10.1093/biomet/asm018,Li2010}. A related model is addressed in \cite{Ma:2013:ADM:2494250.2494257,chandrasekaran2012}, with the additional assumption that the sought solution can be split as $\vC_1+\vC_2$, where $\vC_1$ is sparse and $\vC_2$ is low--rank. Finally, let us mention the ADMM algorithm from \cite{Zhou14}, and the incremental proximal gradient approach from \cite{Richard:2012:ESS:3042573.3042584}, both addressing Problem \eqref{eq:prob} when $f$ is the squared Frobenius norm, $g_0$ is a nuclear norm, and $g_1$ is an element--wise $\ell_1$ norm.

The main goal of this paper is to propose numerical approaches for solving Problem~\eqref{eq:prob}. Two settings will be investigated, namely (\emph{i}) $g_1 \equiv 0$, i.e. the whole cost function is a spectral one, (\emph{ii}) $g_1 \not \equiv 0$. In the former case, some general results concerning the $D^f$-proximity operator of $g_0$ are established.
In the latter case, a Douglas--Rachford optimization method is proposed, which leads us to calculate the proximity operators of several spectral functions of interest.
We then consider applications of our results to the estimation of (possibly low-rank) covariance matrices from noisy observations of multivalued random variables. 
Two variational approaches are proposed for estimating the unknown covariance matrix, depending on the prior assumptions made on it. 
We show that the cost function arising from the first formulation can be minimized through our proposed Douglas-Rachford procedure under mild assumptions on the involved regularization functions. 
The second formulation of the problem aims at preserving desirable sparsity properties of the inverse covariance (i.e., precision) matrix. 
We establish that the proposed objective function is a difference of convex terms, and we introduce a novel majorization-minimization (MM) algorithm to optimize it.

The paper is organized as follows.  \Autoref{sec:Dec} is devoted to the solution of the particular instance of Problem \eqref{eq:prob} corresponding to $g_1\equiv 0$. \Autoref{sec:DR}
describes a proximal minimization algorithm to address the problem when $g_1 \not \equiv 0$. Its implementation is discussed for a bunch of useful choices for the involved functionals. \Autoref{sec:MMDR} presents two new approaches for estimating covariance matrices from noisy data. 
Finally, in \autoref{sec:Num}, numerical experiments illustrate the applicability of the proposed methods, and its good performance with respect to the state-of-the-art, in two distinct scenarios. 

\textit{Notation:} Greek letters usually designate real numbers, bold letters designate vectors in a Euclidean space, capital bold letters indicate matrices. The $i$--th element of the vector $\vd$ is denoted by $d_i$. $\Diag(\vd)$ denotes the diagonal matrix whose diagonal elements are the components of $\vd$.
$\mathcal{D}_n$ is the cone of vectors $\vd\in \rd^n$ whose components are ordered by decreasing values.
The symbol $\vect(\vC)$ denotes the vector resulting from a column--wise ordering of the elements of matrix $\vC$. The product $\vA\otimes \vB$ denotes the classical Kronecker product of matrices $\vA$ and $\vB$.  Let $\mathcal{H}$ be a real Hilbert space endowed with an inner product $\langle \cdot,\cdot \rangle$ and a norm $\| \cdot \|$, 
the domain of a function $f\colon \mathcal{H}\to ]-\infty,+\infty]$ is 
$\dom f = \{ x \in \mathcal{H} \mid f(x) < +\infty\}$. $f$ is coercive if $\lim_{\|x\|\to +\infty} f(x) = +\infty$ and supercoercive if $\lim_{\| x \| \to +\infty} f(x)/\|x\| = +\infty$.
The Moreau subdifferential of $f$ at $x\in \mathcal{H}$ is 
$\partial f(x) = \{t \in \mathcal{H} \mid (\forall y \in  \mathcal{H}) f(y) \ge f(x)+\langle t,y-x \rangle\}$.
$\Gamma_0(\mathcal{H})$ denotes the class of lower-semicontinuous convex functions from $\mathcal{H}$ to $]-\infty,+\infty]$ with a nonempty domain (proper).
If $f\in \Gamma_0(\mathcal{H})$ is (G\^ateaux) differentiable at $x\in \mathcal{H}$, then $\partial f(x) = \{\nabla f(x)\}$ where $\nabla f(x)$ is the gradient of $f$ at $x$.
If a function $f\colon \mathcal{H}\to ]-\infty,+\infty]$ possesses a unique minimizer on a set $E\subset  \mathcal{H}$, it will be denoted by $\argmin{x\in E}{f(x)}$.
If there are possibly several minimizers, their set will be denoted by $\Argmin{x\in E}{f(x)}$. 
Given a set $E$, $\operatorname{int}(E)$ designates the interior of $E$ and $\iota_E$ denotes the indicator function of the set, which is equal to $0$ over this set and $+\infty$ otherwise.
In the remainder of the paper, the underlying Hilbert space will be $\sm$, the set of real symmetric matrices equipped with the Frobenius norm, denoted by $\|\cdot\|_{\rm F}$.
The matrix spectral norm is denoted by $\|\cdot\|_{\rm S}$, the  $\ell_1$ norm of a matrix $\vA = (A_{i,j})_{i,j}$ is  $\|\vA\|_1=\sum_{i,j}|A_{i,j}|$.
For every $p\in [1,+\infty[$,
$\Sp{p}{\cdot}$ denotes the Schatten $p$--norm, the nuclear norm being obtained when $p = 1$. 
$\mathcal{O}_n$ denotes the set of orthogonal matrices of dimension $n$ with real elements; $\smsp$ and $\smp$ denote the set of real symmetric positive semidefinite, and symmetric positive definite matrices, respectively, of dimension $n$. 
$\Id$ denotes the identity matrix whose dimension will be clear from the context. 
The soft thresholding operator $\soft_\mu$ and the hard thresholding  operator $\hard_\mu$ of parameter $\mu \in [0,+\infty[$ are given by
\begin{equation}
(\forall \xi \in \mathbb{R})\qquad
\soft_\mu(\xi) = 
\begin{cases}
\xi - \mu & \mbox{if $\xi > \mu$}\\
\xi+ \mu & \mbox{if $\xi < -\mu$}\\
0 & \mbox{otherwise}
\end{cases},\qquad
\hard_\mu(\xi) = 
\begin{cases}
\xi & \mbox{if $|\xi|>\mu$}\\
0 & \mbox {otherwise.}
\end{cases}
\end{equation}

\section{Spectral Approach}
\label{sec:Dec}
In this section, we show that, in the particular case when $g_1 \equiv 0$, Problem \eqref{eq:prob}  reduces to the optimization of a function defined on $\mathbb{R}^n$.
Indeed, the problem then reads: 
\begin{equation}
\label{eq:spectprob}
\minimize{\vC\in\sm}  {f(\vC) -\tr{\vT\vC} + g_0(\vC)},
\end{equation}
where the spectral forms of $f$ and $g_0$ allow us to take advantage of the eigendecompositions of $\vC$ and $\vT$
in order to simplify the optimization problem, as stated below.
\begin{theorem}
\label{thh:P}
Let $\vt\in \mathbb{R}^n$ be a vector of eigenvalues of $\vT$ and let  $\vU_\vT\in\mathcal{O}_n$ be such that $\vT = \vU_T\Diag(\vt)\vU_T^\top$.
Let $f$ and $g_0$ be functions satisfying  \eqref{eq:fphi} and \eqref{eq:g0}, respectively,
where $\varphi$ and $\psi$ are lower-semicontinuous functions.
Assume that  $\dom \varphi \cap \dom \psi \neq \varnothing$ and that the function $\vd \mapsto \varphi(\vd) -\vd^\top\vt + \psi(\vd)$ is coercive.
Then a solution to Problem \eqref{eq:spectprob} exists, which is given by
\begin{equation}\label{e:vCoptspec}
\widehat{\vC} = \vU_\vT \Diag(\widehat{\vd}) \vU_\vT^\top
\end{equation}
where $\widehat{\vd}$ is any solution to the following problem: 
\begin{equation}
\label{eq:spectprob1}
\minimize{\vd\in\rd^n}  \varphi(\vd) -\vd^\top\vt + \psi(\vd).
\end{equation}
\end{theorem}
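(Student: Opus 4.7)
The plan is to parametrize any $\vC \in \sm$ through an eigendecomposition $\vC = \vU \Diag(\vd) \vU^\top$ with $\vU \in \mathcal{O}_n$ and $\vd \in \rd^n$, and to split the minimization in \eqref{eq:spectprob} into an outer one over $\vd$ and an inner one over $\vU$. By \eqref{eq:fphi} and \eqref{eq:g0}, $f(\vC) = \varphi(\vd)$ and $g_0(\vC) = \psi(\vd)$ for any ordering of the eigenvalues (so $\varphi$ and $\psi$ are in fact permutation invariant); only the coupling term $-\tr{\vT\vU\Diag(\vd)\vU^\top}$ depends on $\vU$, and this is what will force the optimal $\vU$ to diagonalize $\vT$ simultaneously with $\vC$.

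First I would establish existence of a minimizer $\widehat{\vd}$ of \eqref{eq:spectprob1}. The map $\vd \mapsto \varphi(\vd) - \vd^\top \vt + \psi(\vd)$ is lower semi-continuous (sum of two l.s.c.\ functions and a continuous linear form), proper (thanks to $\dom\varphi \cap \dom\psi \neq \varnothing$), and coercive on $\rd^n$ by hypothesis, so a standard argument yields such a $\widehat{\vd}$. Setting $\widehat{\vC} = \vU_{\vT} \Diag(\widehat{\vd}) \vU_{\vT}^\top$ and using that $\vT$ and $\widehat{\vC}$ are simultaneously diagonalized in $\vU_{\vT}$ gives $\tr{\vT\widehat{\vC}} = \widehat{\vd}^\top\vt$, so $\widehat{\vC}$ realizes the value
\[
v^{\star} := \varphi(\widehat{\vd}) - \widehat{\vd}^\top\vt + \psi(\widehat{\vd})
\]
in the objective of \eqref{eq:spectprob}.

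The crux is to show that $f(\vC) - \tr{\vT\vC} + g_0(\vC) \geq v^{\star}$ for every $\vC \in \sm$. Fixing an eigendecomposition $\vC = \vU \Diag(\vd) \vU^\top$ and applying Fan's maximum principle for symmetric matrices,
\[
\tr{\vT\vU\Diag(\vd)\vU^\top} \leq \sum_{i=1}^n t_{[i]} d_{[i]},
\]
where $t_{[i]}$ and $d_{[i]}$ denote the components of $\vt$ and $\vd$ sorted in decreasing order. The permutation invariance of $\varphi$ and $\psi$ lets me freely reorder $\vd$ without altering $\varphi(\vd) + \psi(\vd)$; a permutation change of variable in \eqref{eq:spectprob1} also shows that $v^{\star}$ is unchanged when $\vt$ is replaced by any permutation of itself, in particular by its decreasing rearrangement. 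Combining these,
\[
f(\vC) - \tr{\vT\vC} + g_0(\vC) \geq \varphi(\vd) + \psi(\vd) - \sum_{i=1}^n t_{[i]} d_{[i]} \geq v^{\star},
\]
with equality at $\widehat{\vC}$, which concludes the proof.

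The main obstacle is invoking the correct trace inequality: one needs Fan's maximum principle in the form $\max_{\vU\in\mathcal{O}_n}\tr{\vA\vU\vB\vU^\top} = \sum_i \lambda_i^{\downarrow}(\vA)\lambda_i^{\downarrow}(\vB)$ for symmetric $\vA,\vB$, and one must carefully use the permutation invariance of $\varphi+\psi$ to align the orderings of the eigenvalues of $\vC$ and $\vT$ without restricting \eqref{eq:spectprob1} to the cone $\mathcal{D}_n$.
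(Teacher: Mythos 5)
Your proof is correct and follows essentially the same route as the paper's: both hinge on the von Neumann/Fan trace inequality (the paper's Lemma~1) to bound $\tr{\vT\vC}$ and reduce the matrix problem to the eigenvalue problem \eqref{eq:spectprob1}, with attainment at a matrix sharing the eigenbasis of $\vT$, and both use properness, lower semicontinuity and coercivity for existence of $\widehat{\vd}$. The only difference is in the bookkeeping of eigenvalue orderings: the paper invokes the rearrangement inequality (Lemma~2) to pass from the cone $\mathcal{D}_n$ back to unordered eigenvalues, whereas you achieve the same reduction via the permutation invariance of $\varphi+\psi$ and a bijective permutation change of variables in the infimum, which is an equally valid (and slightly leaner) way to close that gap.
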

For the sake of clarity, before establishing this result, we recall two useful lemmas from linear algebra.
\begin{lemma}{\rm \cite[Chapter 9, Sec. H, p. 340]{marshall11}}
\label{lemma:1} Let $\vC\in\sm$ and let $\vd\in \mathcal{D}_n$ be a vector of ordered eigenvalues of this matrix.
Let $\vT\in\sm$ and let $\vt\in \mathcal{D}_n$ be a vector of ordered eigenvalues of this matrix.
The following inequality holds:
\begin{equation}
\tr{\vC  \vT  }  \le \vd^\top \vt.
\end{equation}
In addition, the upper bound is reached if and only if
$\vT$ and $\vC$ share the same eigenbasis, i.e. there exists $\vU \in \mathcal{O}_n$ such that
$\vC=\vU \Diag(\vd)\vU^\top$ and $\vT = \vU \Diag(\vt) \vU^\top$.
\end{lemma}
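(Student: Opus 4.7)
The plan is to prove this classical trace inequality (often attributed to von Neumann in the singular-value setting and to Fan/Ruhe in the symmetric setting) by reducing the trace to a bilinear form indexed by the entries of an orthogonal change-of-basis matrix, and then applying Birkhoff's theorem together with the rearrangement inequality.

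First I would fix eigendecompositions $\vC = \vU_\vC \Diag(\vd) \vU_\vC^\top$ and $\vT = \vU_\vT \Diag(\vt) \vU_\vT^\top$ with $\vU_\vC,\vU_\vT \in \mathcal{O}_n$, and set $\vQ = \vU_\vC^\top \vU_\vT = (q_{ij})_{1\le i,j\le n}\in \mathcal{O}_n$. Using the cyclic property of the trace and the fact that for a diagonal matrix the trace only involves diagonal entries, I would derive
\begin{equation*}
\tr{\vC\vT} = \tr{\Diag(\vd)\,\vQ\,\Diag(\vt)\,\vQ^\top} = \sum_{i=1}^n\sum_{j=1}^n d_i\, t_j\, q_{ij}^2.
\end{equation*}
Because $\vQ$ is orthogonal, the nonnegative matrix $\vP = (q_{ij}^2)_{i,j}$ has all row sums and column sums equal to $1$, i.e.\ it is doubly stochastic.

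Next I would invoke Birkhoff's theorem to write $\vP = \sum_{k} \lambda_k \vPi_k$, where $\lambda_k\ge 0$ with $\sum_k \lambda_k = 1$ and each $\vPi_k$ is a permutation matrix associated with some $\sigma_k\in \mathfrak{S}_n$. Substituting yields
\begin{equation*}
\tr{\vC\vT} = \sum_k \lambda_k \sum_{i=1}^n d_i\, t_{\sigma_k(i)}.
\end{equation*}
Since $\vd,\vt\in \mathcal{D}_n$ are both sorted in decreasing order, the rearrangement inequality gives $\sum_i d_i t_{\sigma(i)} \le \sum_i d_i t_i = \vd^\top\vt$ for every permutation $\sigma$. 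Taking the convex combination over $k$ yields the desired upper bound.

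For the equality case, the forward implication (sufficiency) is immediate: if $\vC = \vU\Diag(\vd)\vU^\top$ and $\vT = \vU\Diag(\vt)\vU^\top$ share an eigenbasis, a direct computation gives $\tr{\vC\vT} = \vd^\top\vt$. The harder direction is necessity. Here I would trace through the equality cases: equality in the Birkhoff step plus the rearrangement inequality forces the doubly stochastic matrix $\vP$ to be supported only on index pairs $(i,j)$ for which $d_i t_j = d_i t_i$; after relabeling within eigenspaces this means $\vQ$ acts as a permutation between blocks corresponding to equal eigenvalues, and so $\vU_\vT$ can be replaced, without loss of generality, by a matrix of eigenvectors of $\vC$ giving the claimed common eigenbasis $\vU$. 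The main obstacle is exactly this equality analysis in the presence of repeated eigenvalues of $\vC$ or $\vT$: the orthogonal matrix $\vQ$ is only pinned down up to rotations within each eigenspace, so care is needed to show that one may select a common $\vU\in\mathcal{O}_n$ rather than conclude $\vU_\vC = \vU_\vT$.
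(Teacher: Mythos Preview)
The paper does not prove this lemma at all: it is quoted verbatim from Marshall, Olkin and Arnold and used as a black box in the proof of Theorem~\ref{thh:P}. So there is nothing to compare your argument against within the paper itself.

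That said, your argument for the inequality is the standard one and is correct: the reduction $\tr{\vC\vT}=\sum_{i,j} d_i t_j q_{ij}^2$ with $\vQ=\vU_\vC^\top\vU_\vT$, the observation that $(q_{ij}^2)_{i,j}$ is doubly stochastic, Birkhoff's decomposition, and the rearrangement inequality (which the paper also recalls as Lemma~\ref{lemma:2}) combine exactly as you describe. One minor remark: you do not strictly need Birkhoff here, since the linear functional $\vP\mapsto \vd^\top\vP\vt$ attains its maximum over the Birkhoff polytope at an extreme point, and those are precisely the permutation matrices; but invoking Birkhoff is perfectly fine.

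Your treatment of the equality case is only a sketch, and you are right to flag repeated eigenvalues as the delicate point. To finish it cleanly, note that equality forces $\sum_{i,j} d_i t_j q_{ij}^2=\vd^\top\vt$, i.e.\ $\sum_{i,j}(d_i t_i - d_i t_j)q_{ij}^2=0$ after summing over columns; a symmetric manipulation and the ordering of $\vd,\vt$ show that $q_{ij}\neq 0$ only when $d_i=d_j$ and $t_i=t_j$. Hence $\vQ$ is block-diagonal with respect to the common refinement of the eigenspace partitions of $\vC$ and $\vT$, and setting $\vU=\vU_\vC$ (or equivalently $\vU_\vT\vQ^\top$) gives a simultaneous diagonalization with the eigenvalues in the stated order. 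This is the step you should spell out rather than leave as ``after relabeling within eigenspaces''.
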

The subsequent lemma is also known as the \emph{rearrangement inequality}:
\begin{lemma} {\rm \cite[Section 10.2, Theorem 368]{hardy1952inequalities}} \label{lemma:2}
Let $\va\in \mathcal{D}_n$ and $\vb \in \mathcal{D}_n$.
Then, for every permutation matrix $\vP$ of dimension $n\times n$,
\begin{equation}
\va^\top\vP \vb \le \va^\top \vb.
\end{equation}
\end{lemma}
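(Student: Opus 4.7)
The plan is to prove this classical rearrangement inequality via an exchange (bubble-sort) argument. First, I would identify the permutation matrix $\vP$ with the permutation $\sigma$ of $\{1,\ldots,n\}$ that it encodes, so that
\begin{equation*}
\va^\top\vP\vb \;=\; \sum_{i=1}^n a_i b_{\sigma(i)}.
\end{equation*}
The claim then becomes $\sum_i a_i b_{\sigma(i)} \le \sum_i a_i b_i$ for every permutation $\sigma$, i.e.\ the identity permutation maximises this sum over the symmetric group.

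Second, I would establish a two-element exchange lemma. Suppose $\sigma$ admits an inversion, namely indices $i < j$ with $\sigma(i) > \sigma(j)$, and let $\sigma'$ be obtained from $\sigma$ by swapping its values at positions $i$ and $j$. A direct calculation gives
\begin{equation*}
\sum_k a_k b_{\sigma'(k)} \;-\; \sum_k a_k b_{\sigma(k)} \;=\; (a_i - a_j)\bigl(b_{\sigma(j)} - b_{\sigma(i)}\bigr),
\end{equation*}
and both factors on the right are nonnegative since $\va,\vb \in \mathcal{D}_n$ yields $a_i \ge a_j$ (from $i<j$) and $b_{\sigma(j)} \ge b_{\sigma(i)}$ (from $\sigma(j) < \sigma(i)$). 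Hence the sum weakly increases under such an exchange.

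Third, I would iterate, restricting to adjacent inversions ($j = i+1$). Each adjacent swap strictly decreases the total inversion count of $\sigma$ by exactly one while weakly increasing the sum, and since a permutation has at most $n(n-1)/2$ inversions the process terminates after finitely many steps at the identity permutation, yielding the desired chain of inequalities. The only point requiring a little care—essentially the whole content of the argument—is verifying that an adjacent transposition of an inverted pair strictly lowers the inversion count (so the iteration terminates), which follows because such a swap only affects the relative order of the two involved positions. No convexity, continuity, or smoothness assumption is invoked; the proof is entirely finite and combinatorial.
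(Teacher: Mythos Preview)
Your proof is correct: the exchange (bubble-sort) argument you describe is the standard proof of the rearrangement inequality, and each step is sound. The paper does not prove this lemma at all; it simply cites Hardy--Littlewood--P\'olya \cite[Section~10.2, Theorem~368]{hardy1952inequalities} as a known result, so your self-contained argument goes beyond what the paper provides.
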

We are now ready to prove \autoref{thh:P}.
\begin{proof}[\autoref{thh:P}] 
Due to the assumptions made on $f$ and $g_0$, Problem \eqref{eq:spectprob} can be reformulated as
$$
\minimize{\vd\in\mathcal{D}_n, \vU_\vC \in\mathcal{O}_n} \varphi(\vd) -\tr{\vU_\vC\Diag(\vd)\vU_\vC^\top\vT} +\psi(\vd).
$$
According to the first claim in \autoref{lemma:1},
$$
 \inf_{\vd\in\mathcal{D}_n, \vU_\vC \in\mathcal{O}_n}\varphi(\vd) -\tr{\vU_\vC\Diag(\vd)\vU_\vC^\top\vT} +\psi(\vd) \geq \inf_{\vd\in\mathcal{D}_n} \varphi(\vd) -\vd^\top \widetilde{\vt} +\psi(\vd), 
$$
where $\widetilde{\vt}\in\mathcal{D}_n$ is the vector of ordered eigenvalues of  $\vT=\widetilde{\vU}_\vT\Diag(\widetilde{\vt})\widetilde{\vU}_\vT^\top$ with $\widetilde{\vU}_\vT \in \mathcal{O}_n$. 
In addition, the last claim in \autoref{lemma:1} allows us to conclude that the lower bound is attained when $\vU_\vC = \widetilde{\vU}_\vT$.
This proves that
\begin{equation}\label{e:infinf1}
\inf_{\vC\in\sm}  {f(\vC) -\tr{\vT\vC} + g_0(\vC)} = \inf_{\vd\in\mathcal{D}_n} \varphi(\vd) -\vd^\top \widetilde{\vt} +\psi(\vd).
\end{equation}
Let us now show that ordering the eigenvalues is unnecessary for our purposes.
Let $\vt\in \mathbb{R}^n$ be a vector of non necessarily ordered eigenvalues of $\vT$.
Then, $\vT=\vU_\vT\Diag(\vt)\vU_\vT^\top$ with $\vU_\vT \in \mathcal{O}_n$ and
there exists a permutation matrix $\vQ$ such that $\vt = \vQ \widetilde{\vt}$.
For every vector $\vd \in \mathcal{D}_n$ and for every permutation matrix $\vP$ of dimension $n\times n$, we have then
\begin{align}
\varphi(\vP\vd) - (\vP\vd)^\top \vt+ \psi(\vP\vd)
=&\,\varphi(\vP\vd) - (\vP\vd)^\top \vQ\widetilde{\vt}+ \psi(\vP\vd)\\
=&\,  \varphi(\vd) - (\vQ^\top\vP\vd)^\top \widetilde{\vt}+ \psi(\vd)\nonumber\\
\geq &\, \varphi(\vd) - \vd^\top \widetilde{\vt}+ \psi(\vd),\nonumber
\end{align}
where the last inequality is a direct consequence of \autoref{lemma:2}. In addition, the equality is obviously reached
if $\vP = \vQ$. Since every vector in $\mathbb{R}^n$ can be expressed as permutation of a vector in $ \mathcal{D}_n$,
we deduce that
\begin{equation}\label{e:infinf2} 
\inf_{\vd\in \mathbb{R}^n} \varphi(\vd) - \vd^\top \vt+ \psi(\vd) = \inf_{\vd \in \mathcal{D}_n} \varphi(\vd) - \vd^\top \widetilde{\vt}+ \psi(\vd).
\end{equation}
Altogether, \eqref{e:infinf1} and \eqref{e:infinf2} lead to
\begin{equation}
\inf_{\vC\in\sm}  {f(\vC) -\tr{\vT\vC} + g_0(\vC)} = \inf_{\vd\in \mathbb{R}^n} \varphi(\vd) - \vd^\top \vt+ \psi(\vd).
\end{equation}
Since the function $\vd \mapsto  \varphi(\vd) - \vd^\top \vt+ \psi(\vd)$ is proper, lower-semicontinuous, and coercive, it follows from \cite[Theorem 1.9]{Rockfellar97} that there exists $\widehat{\vd}\in \mathbb{R}^n$ such that
\begin{equation}
\varphi(\widehat{\vd}) - \widehat{\vd}^\top \vt+ \psi(\widehat{\vd}) = \inf_{\vd\in \mathbb{R}^n} \varphi(\vd) - \vd^\top \vt+ \psi(\vd).
\end{equation}
In addition, it is easy to check that if $\widehat{\vC}$ is given by \eqref{e:vCoptspec} then
\begin{equation}
f(\widehat{\vC}) -\tr{\vT\widehat{\vC}} + g_0(\widehat{\vC}) = \varphi(\widehat{\vd}) - \widehat{\vd}^\top \vt+ \psi(\widehat{\vd}),
\end{equation}
which yields the desired result.\qed
\end{proof}
Before deriving a main consequence of this result, we need to recall some definitions from convex analysis \cite[Chapter 26]{Rockfellar70} \cite[Section~3.4]{Bauschke03}:
\begin{definition}
Let $\mathcal{H}$ be a finite dimensional real Hilbert space with norm $\| \cdot \|$ and scalar product $\langle \cdot, \cdot \rangle$. Let $h\colon \mathcal{H}\to ]-\infty,+\infty]$ be a
proper convex function.
\begin{itemize}
\item[$\bullet$] $h$ is essentially smooth if $h$ is differentiable on $\operatorname{int}(\dom h)\neq \varnothing$ and\linebreak $\lim_{n\to +\infty} \|\nabla h(x_n)\| = +\infty$
for every sequence $(x_n)_{n\in \mathbb{N}}$ of $\operatorname{int}(\dom h)$ converging to a point on the boundary of $\dom h$.
\item[$\bullet$]  $h$ is essentially strictly convex if $h$ is strictly convex on every convex subset of the domain of its subdifferential.
\item[$\bullet$]  $h$ is a Legendre function if it is both essentially smooth and essentially strictly convex.
\item[$\bullet$]  If $h$ is differentiable on $\operatorname{int}(\dom h)\neq \varnothing$, the $h$-Bregman divergence is the function
$D^h$ defined on $\mathcal{H}^2$ as
\begin{multline}
(\forall (x,y)\in \mathcal{H}^2)\\
D^h(x,y) = \begin{cases}
h(x)-h(y) - \langle \nabla h(y),x-y  \rangle & \mbox{if $y \in \operatorname{int}(\dom f)$}\\
+\infty & \mbox{otherwise.}
\end{cases}
\end{multline}
\item[$\bullet$]  Assume that $h$ is a lower-semicontinuous Legendre function and that $\ell$ is a lower-semicontinuous convex function
such that $\operatorname{int}(\dom h) \cap \dom \ell \neq \varnothing$ and either $\ell$ is bounded from below 
or $h+\ell$ is supercoercive. Then, the $D^h$-proximity operator of $\ell$ is
\begin{align}
\prox^h_\ell \colon  \operatorname{int}(\dom h) &\to \operatorname{int}(\dom h) \cap \dom \ell\\
y &\mapsto \argmin{x\in \mathcal{H}}{\ell(x)+D^h(x,y)}.\nonumber
\end{align}
\end{itemize}
\end{definition}
In this definition, when $h = \|\cdot\|^2/2$, we recover the classical definition of the proximity operator in \cite{Moreau65},
which is defined over $\mathcal{H}$, for every function $\ell \in \Gamma_0(\mathcal{H})$, and that will be simply
denoted by $\prox_\ell$. 

We will also need the following result:
\begin{lemma}\label{lem:dom}
Let $f$ be a function satisfying  \eqref{eq:fphi} where $\varphi\colon \mathbb{R}^n \to ]-\infty,+\infty]$. Let $\vC \in \sm$ and let $\vd \in \mathbb{R}^n$ be a vector of eigenvalues of this matrix.
The following hold:
\begin{enumerate}
\item[(i)] $\vC \in \dom f$ if and only if $\vd \in \dom \varphi$;
\item[(ii)] $\vC \in \operatorname{int}(\dom f)$ if and only if $\vd \in \operatorname{int}(\dom \varphi)$.
\end{enumerate}
\end{lemma}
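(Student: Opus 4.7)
Claim (i) is essentially tautological. Since the defining identity $f(\vC) = \varphi(\mathbf{\Sigma}\vd)$ must hold for \emph{every} permutation matrix $\mathbf{\Sigma}$, $\varphi$ is necessarily symmetric in its arguments, so $\dom\varphi$ is permutation-invariant; taking $\mathbf{\Sigma}=\Id$ yields $f(\vC)=\varphi(\vd)$, and hence $\vC\in\dom f\Leftrightarrow \varphi(\vd)<+\infty\Leftrightarrow \vd\in\dom\varphi$. I will use these two observations silently throughout the proof of (ii).

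For the implication $\vd\in\interior(\dom\varphi)\Rightarrow \vC\in\interior(\dom f)$ in (ii), I rely on the continuity of the eigenvalues as a function of the matrix. The cleanest device is the Hoffman--Wielandt inequality: for any $\vC,\vC'\in\sm$, the decreasingly ordered eigenvalue vectors $\widetilde{\vd},\widetilde{\vd}'\in\mathcal{D}_n$ satisfy $\|\widetilde{\vd}'-\widetilde{\vd}\|_2\le \|\vC'-\vC\|_{\rm F}$. Pick $r>0$ such that the Euclidean ball $\vd+B(0,r)$ is contained in $\dom\varphi$; by the permutation-invariance of $\dom\varphi$, the ball $\widetilde{\vd}+B(0,r)$ is also contained in $\dom\varphi$. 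Then for any $\vC'\in\sm$ with $\|\vC'-\vC\|_{\rm F}<r$, the ordered eigenvalue vector $\widetilde{\vd}'$ of $\vC'$ lies in $\widetilde{\vd}+B(0,r)\subset\dom\varphi$, and claim (i) forces $\vC'\in\dom f$, so $\vC\in\interior(\dom f)$.

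For the converse $\vC\in\interior(\dom f)\Rightarrow \vd\in\interior(\dom\varphi)$, I build perturbations directly in a fixed eigenbasis. Fix $\vU\in\mathcal{O}_n$ with $\vC=\vU\Diag(\vd)\vU^\top$ and pick $\delta>0$ such that every $\vC'\in\sm$ with $\|\vC'-\vC\|_{\rm F}<\delta$ belongs to $\dom f$. For an arbitrary $\vd'\in\rd^n$ satisfying $\|\vd'-\vd\|_2<\delta$, set $\vC':=\vU\Diag(\vd')\vU^\top$; the orthogonal invariance of the Frobenius norm gives $\|\vC'-\vC\|_{\rm F}=\|\vd'-\vd\|_2<\delta$, hence $\vC'\in\dom f$, and since $\vd'$ is a vector of eigenvalues of $\vC'$, claim (i) yields $\vd'\in\dom\varphi$. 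This shows $\vd+B(0,\delta)\subset\dom\varphi$, i.e.\ $\vd\in\interior(\dom\varphi)$.

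The only genuine subtlety is the first implication of (ii), where one must combine a quantitative continuity bound for the spectrum with the permutation-invariance of $\dom\varphi$; the rest reduces to unpacking definitions and exploiting the unitary invariance of $\|\cdot\|_{\rm F}$.
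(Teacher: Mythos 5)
Your proof is correct and follows essentially the same route as the paper's: the implication $\vC\in\interior(\dom f)\Rightarrow\vd\in\interior(\dom\varphi)$ via perturbations in a fixed eigenbasis and the unitary invariance of $\|\cdot\|_{\rm F}$, and the converse via a quantitative eigenvalue perturbation bound. The only difference is that you invoke the Hoffman--Wielandt inequality where the paper uses Weyl's inequality (chained with $\|\cdot\|_{\rm S}\le\|\cdot\|_{\rm F}$), and you make explicit the permutation-invariance of $\dom\varphi$ that the paper handles with a ``without loss of generality'' reduction to ordered eigenvalues; both choices are sound.
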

\begin{proof}
(i) obviously holds since $f$ is a spectral function.\\
Let us now prove (ii).
If $\vC \in \operatorname{int}(\dom f)$, then $\vd \in \dom \varphi$. In addition, there exists $\rho \in ]0,+\infty[$ such that,
for every $\vC'\in \sm$, if $\| \vC'-\vC \|_{\rm F} \le \rho$, then $\vC'\in \dom f$.
Let $\vU_\vC \in \mathcal{O}_n$ be such that $\vC = \vU_\vC \Diag(\vd) \vU_\vC^\top$ and let
us choose $\vC' = \vU_\vC \Diag(\vd') \vU_\vC^\top$ with $\vd' \in \mathbb{R}^n$. Since $\vC$ and $\vC'$ share the same eigenbasis,
\begin{equation}
\| \vC'-\vC \|_{\rm F} = \|  \vd' - \vd \|.
\end{equation}
Hence, for any $\vd' \in \mathbb{R}^n$ such that $\|  \vd' - \vd \| \le \rho$, $\vC'\in \dom f$, hence $\vd'\in \dom \varphi$.
This shows that $\vd \in \operatorname{int}(\dom \varphi)$.\\
Conversely, let us assume that $\vd = (d_i)_{1\le i \le n} \in \operatorname{int}(\dom \varphi)$.
Without loss of generality, it can be assumed that $\vd  \in \mathcal{D}_n$. There thus exists $\rho \in ]0,+\infty[$ such that
for every $\vd' = (d'_i)_{1\le i \le n} \in \mathcal{D}_n$, if 
\begin{equation}
(\forall i \in \{1,\ldots,n\}) \qquad | d'_i- d_i | \le \rho,
\end{equation}
then $\vd' \in \dom \varphi$. 
Furthermore, let $\vC'$ be any matrix in $\sm$ such that
\begin{equation}
\| \vC' - \vC \|_{\rm F} \le \rho
\end{equation}
and let $\vd' = (d'_i)_{1\le i \le n} \in \mathcal{D}_n$ be a vector of eigenvalues of  $\vC$.
It follows from Weyl's inequality \cite{marshall11} that
\begin{equation}
(\forall i \in \{1,\ldots,n\}) \qquad |d'_i - d_i | \le \| \vC' - \vC \|_{\rm S} \le \| \vC' - \vC \|_{\rm F} \le \rho.
\end{equation}
We deduce that $\vd' \in \dom \varphi$ and, consequently $\vC' \in  \dom f$. This shows 
that $\vC \in \operatorname{int}(\dom f)$. \qed
\end{proof}

As an offspring of \autoref{thh:P}, we then get:
\begin{corollary}\label{co:proxdiv}
Let $f$ and $g_0$ be functions satisfying  \eqref{eq:fphi} and \eqref{eq:g0}, respectively, where $\varphi\in \Gamma_0(\mathbb{R}^n)$ is a Legendre function,
$\psi \in \Gamma_0(\mathbb{R}^n)$, $\operatorname{int}(\dom \varphi) \cap \dom \psi\neq \varnothing$, and either $\psi$ is bounded from below 
or $\varphi+\psi$ is supercoercive. Then, the $D^f$-proximity operator of $g_0$ is defined at  every $\vY\in\sm$ such that
 $\vY = \vU_\vY\Diag(\vy)\vU_\vY^\top$ with $\vU_\vY\in\mathcal{O}_n$ and $\vy \in \operatorname{int}(\dom \varphi)$, and it is expressed as
\begin{align}
\prox^f_{g_0}(\vY) = \vU_\vY \Diag(\prox^\varphi_\psi(\vy)) \vU_\vY^\top.
\end{align}
\end{corollary}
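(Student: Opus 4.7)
The plan is to reduce the matrix-side $D^f$-proximity problem to the vector-side $D^\varphi$-proximity problem and then invoke \autoref{thh:P}.

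First, I would expand the definition of the operator. By \autoref{lem:dom}(ii), the hypothesis $\vy \in \operatorname{int}(\dom \varphi)$ forces $\vY \in \operatorname{int}(\dom f)$, so that $\nabla f(\vY)$ is well defined and $D^f(\vC,\vY)$ is finite. Dropping the additive constants $-f(\vY) + \tr{\nabla f(\vY)\vY}$, the problem
$$\prox^f_{g_0}(\vY) = \argmin{\vC\in \sm}{g_0(\vC) + D^f(\vC,\vY)}$$
becomes equivalent to minimizing $f(\vC) - \tr{\nabla f(\vY)\vC} + g_0(\vC)$ over $\sm$, which matches the setting of \autoref{thh:P} with $\vT := \nabla f(\vY)$.

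Second, I would invoke the classical chain rule for spectral functions: for $\vY = \vU_\vY \Diag(\vy)\vU_\vY^\top$ with $\vy \in \operatorname{int}(\dom \varphi)$, the permutation invariance of $\varphi$ implicit in \eqref{eq:fphi} together with its Legendre property (hence differentiability at $\vy$) yields
$$\nabla f(\vY) = \vU_\vY \Diag(\nabla \varphi(\vy)) \vU_\vY^\top.$$
Thus, taking $\vt := \nabla \varphi(\vy)$ and $\vU_\vT := \vU_\vY$ diagonalizes $\vT = \nabla f(\vY)$ in the eigenbasis of $\vY$. Applying \autoref{thh:P} then reduces the problem to
$$\minimize{\vd \in \mathbb{R}^n}{\varphi(\vd) - \vd^\top \vt + \psi(\vd)},$$
which differs by the constant $\varphi(\vy) - \langle \nabla\varphi(\vy),\vy\rangle$ from $\minimize{\vd \in \mathbb{R}^n}{\psi(\vd) + D^\varphi(\vd,\vy)}$. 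By the very definition of the $D^\varphi$-proximity operator, the unique minimizer is $\widehat{\vd} = \prox^\varphi_\psi(\vy)$, and the corollary's hypotheses on $\varphi$ and $\psi$ are precisely those that guarantee its well-posedness. Plugging $\widehat{\vd}$ into \eqref{e:vCoptspec} then yields the announced expression for $\prox^f_{g_0}(\vY)$.

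The main obstacle is verifying the coercivity premise of \autoref{thh:P}. When $\varphi + \psi$ is supercoercive, subtracting the linear form $\vd \mapsto \vd^\top \vt$ preserves coercivity and the theorem applies directly. When only $\psi$ is bounded from below, coercivity of $\vd \mapsto \varphi(\vd) - \vd^\top \vt + \psi(\vd)$ may fail, but one can bypass this by noting that coercivity is used in the proof of \autoref{thh:P} only to invoke Rockafellar's existence theorem for $\widehat{\vd}$, whereas existence and uniqueness of $\widehat{\vd} = \prox^\varphi_\psi(\vy)$ are already built into the definition of the Bregman proximity operator under the current hypotheses. The equality-of-infima portion of \autoref{thh:P}, which rests solely on Lemmas~\ref{lemma:1}--\ref{lemma:2}, then suffices to transfer the vector-side minimizer to the matrix side via \eqref{e:vCoptspec}.
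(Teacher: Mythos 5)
Your overall route is the paper's: reduce $\prox^f_{g_0}(\vY)$ to Problem \eqref{eq:spectprob} with $\vT=\nabla f(\vY)$, diagonalize $\vT$ in the eigenbasis of $\vY$ via the spectral gradient formula, apply \autoref{thh:P}, and recognize the resulting vector problem as the one defining $\prox^\varphi_\psi(\vy)$. Two points need attention. First, your claim that coercivity of $\vd\mapsto\varphi(\vd)-\vd^\top\vt+\psi(\vd)$ ``may fail'' when only $\psi$ is bounded below is a misdiagnosis: since $\varphi$ is Legendre, hence essentially strictly convex, $\vt=\nabla\varphi(\vy)$ lies in $\operatorname{int}(\dom \varphi^*)$, which is equivalent to coercivity of $\varphi-\langle\cdot,\vt\rangle$; adding a lower-bounded $\psi$ preserves coercivity, so \autoref{thh:P} applies exactly as stated. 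Your proposed bypass (using only the equality-of-infima portion of the proof of \autoref{thh:P} together with existence of the vector-side Bregman prox) does deliver existence of a matrix minimizer, but it is an unnecessary detour around a hypothesis that in fact holds.

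Second --- and this is the genuine gap --- the corollary asserts that $\prox^f_{g_0}$ \emph{is defined} at $\vY$, i.e.\ that the matrix-side minimizer exists and is \emph{unique}, and the displayed formula presupposes single-valuedness. Your argument only shows that $\vU_\vY\Diag(\widehat{\vd})\vU_\vY^\top$ is \emph{a} minimizer; uniqueness of the vector-side minimizer does not by itself exclude other matrix minimizers, since a priori a competing minimizer could share the eigenvalues $\widehat{\vd}$ but sit in a different eigenbasis when $\vT$ has repeated eigenvalues. The paper closes this by transferring all hypotheses to the matrix level: $\varphi,\psi\in\Gamma_0(\mathbb{R}^n)$ imply $f,g_0\in\Gamma_0(\sm)$, $\varphi$ Legendre implies $f$ Legendre (Lewis's results on spectral functions), lower-boundedness and supercoercivity transfer, and \autoref{lem:dom} gives $\operatorname{int}(\dom f)\cap\dom g_0\neq\varnothing$; these are precisely the conditions under which the $D^f$-proximity operator of $g_0$ is well defined and single-valued on $\operatorname{int}(\dom f)$. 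You need this verification (or a direct essential-strict-convexity argument on $\sm$) before you may write the conclusion as an equality for $\prox^f_{g_0}(\vY)$.
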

\begin{proof}
According to the properties of spectral functions \cite[Corollary 2.7]{Lewis96}, 
\begin{equation}
\varphi \in \Gamma_0(\mathbb{R}^n) \;\mbox{(resp. $\psi \in \Gamma_0(\mathbb{R}^n)$)} \quad \Rightarrow \quad f \in \Gamma_0(\sm)  
\:\mbox{(resp. $g_0 \in \Gamma_0(\sm)$)}.
\end{equation}
In addition, according to \cite[Corollaries 3.3\&3.5]{Lewis96}, since $\varphi$ is a Legendre function, $f$ is a Legendre function.
It is also straightforward to check that, when $\psi$ is lower bounded, then $g_0$ is lower bounded and, when $\varphi+\psi$ is supercoercive,
then $f+g_0$ is supercoercive.  It also follows from \autoref{lem:dom} that
$\operatorname{int}(\dom \varphi) \cap \dom \psi\neq \varnothing \Leftrightarrow \operatorname{int}(\dom f) \cap \dom g_0 \neq \varnothing$.
%
%

The above results show that the $D^f$-proximity operator of $g_0$ is properly defined as follows:
\begin{align}
\prox^f_{g_0} \colon  \operatorname{int}(\dom f) &\to \operatorname{int}(\dom f) \cap \dom g_0\\
\vY &\mapsto \argmin{\vC\in \sm}{g_0(\vC)+D^f(\vC,\vY)}.\nonumber
\end{align}
This implies that computing the $D^f$-proximity operator of $g_0$
at $\vY \in \operatorname{int}(\dom f)$ amounts to finding
the unique solution to Problem~\eqref{eq:spectprob} where $\vT = \nabla f(\vY)$.
Let $\vY = \vU_\vY \Diag(\vy) \vU_\vY^\top$ with $\vU_\vY \in \mathcal{O}_n$ and $\vy \in \mathbb{R}^n$.
By \autoref{lem:dom}(ii), $\vY \in \operatorname{int}(\dom f) \Leftrightarrow \vy\in \operatorname{int}(\dom(\varphi))$ and,
according to \cite[Corollary 3.3]{Lewis96}, $\vT = \vU_\vY \Diag(\vt) \vU_\vY^\top$ with $\vt = \nabla \varphi(\vy)$.

Furthermore, as $\varphi$ is essentially strictly convex, it follows from \cite[Theorem 5.9(ii)]{Bauscke-Borwein01} that
$\vt=\nabla \varphi(\vy) \in \operatorname{int}(\dom f^*)$, which according to \cite[Theorem 14.17]{Bauschke:2017} is equivalent to
the fact that $\vd \mapsto \varphi(\vd) - \vd^\top \vt$ is coercive. So, if $\psi$ is lower-bounded, 
$\vd\mapsto \varphi(\vd) - \vd^\top \vt + \psi(\vd)$ is coercive. The same conclusion obviously holds if
$\varphi+\psi$ is supercoercive. This shows that the assumptions of \autoref{thh:P} are met.
Consequently, applying this theorem yields  
\begin{equation}
\prox^f_{g_0}(\vY) = \vU_\vY \Diag(\widehat{\vd})  \vU_\vY^\top,
\end{equation}
where $\widehat{\vd}$ minimizes
\begin{equation}
\vd \mapsto \varphi(\vd) - \vd^\top \vt + \psi(\vd)
 \end{equation}
 or, equivalently,
\begin{equation}
\vd \mapsto  \psi(\vd)+D^\varphi(\vd,\vy).
 \end{equation}
This shows that $\widehat{\vd} = \prox^\varphi_\psi(\vy)$.\qed
\end{proof}
\begin{rem}
\autoref{co:proxdiv} extends known results concerning the case when $f=$\linebreak $\|\cdot\|_{\rm F}/2$ \cite{Cai10}.
A rigorous derivation of the proximity operator of spectral functions in $\Gamma_0(\sm)$ 
for the standard Frobenius metric can be found in \cite[Corollary 24.65]{Bauschke:2017}.
Our proof allows us to recover a similar result by adopting a more general approach. In particular, it is worth noticing that
\autoref{thh:P}  does not require any convexity assumption.
\end{rem}

\section{Proximal Iterative Approach}
\label{sec:DR}
Let us now turn to the more general case of the resolution of Problem \eqref{eq:prob} when $f\in \Gamma_0(\mathcal{S}_n)$ and $g_1 \not\equiv 0$. Proximal splitting approaches for finding a minimizer
of a sum of non-necessarily smooth functions have 
attracted a large interest in the last years \cite{combettes:hal-00643807,Parikh14,Komodakis15,Burger2016}. In these methods, the functions can be dealt with either via their gradient or their proximity operator depending on their differentiability properties. In this section, we first list a number of proximity operators of scaled versions of $ f  - \tr{\vT\, \cdot} +g_0$, where $f$ and $g_0$, satisfying \eqref{eq:fphi} and \eqref{eq:g0}, are chosen among several options that can be useful in a wide range of practical scenarios. Based on these results, we then propose a proximal splitting Douglas-Rachford algorithm to solve Problem \eqref{eq:prob}.  

\subsection{Proximity Operators}
\label{sub:prox}
By definition, computing the proximity operator of $\gamma \lp f  - \tr{\vT\, \cdot} +g_0\rp$ with $\gamma \in ]0,+\infty[$
at $\overline{\vC}  \in \sm$ amounts to 
find a minimizer of the function
\begin{equation}
\label{eq:prox}
\vC \mapsto f(\vC) - \tr{\vT \vC} +g_0(\vC) +\frac{1}{2\gamma}\|\vC-\overline{\vC}\|_{\rm F}^2
\end{equation}
over $\mathcal{S}_n$.
The (possibly empty) set of such minimizers is denoted by\linebreak $\operatorname{Prox}_{\gamma \lp f  - \tr{\vT\, \cdot} +g_0\rp}(\overline{\vC})$.
 As pointed out in Section \ref{sec:Dec}, if $f + g_0 \in \Gamma_0(\mathcal{S}_n)$ then this set is a singleton 
 $\{ \prox_{\gamma \lp f  - \tr{\vT\, \cdot} +g_0\rp}(\overline{\vC})\}$. We have the following characterization of this proximity operator:
 \begin{proposition}\label{prop:proxDR}
Let $\gamma \in ]0,+\infty[$ and $\overline{\vC}  \in \sm$.
 Let $f$ and $g_0$ be functions satisfying  \eqref{eq:fphi} and \eqref{eq:g0}, respectively, where $\varphi\in \Gamma_0(\mathbb{R}^n)$
 and $\psi$ is a lower-semicontinuous function such that  $\dom \varphi \cap \dom \psi \neq \varnothing$. 
Let $\bm{\lambda}\in \mathbb{R}^n$ and $\vU\in\mathcal{O}_n$ be such that $\overline{\vC}+\gamma \vT = \vU\Diag(\bm{\lambda}) \vU^\top$.
\begin{enumerate}
\item[(i)] If $\psi$ is lower bounded by an affine function then $\operatorname{Prox}_{\gamma\lp \varphi + \psi\rp}\lp\bm{\lambda}\rp  \neq \varnothing$ and, for every 
$\widehat{\bm{\lambda}} \in \operatorname{Prox}_{\gamma\lp \varphi + \psi\rp}\lp\bm{\lambda}\rp$,
\begin{equation}
\label{eq:proxfg0ncs}
\vU \Diag(\widehat{\bm{\lambda}})\vU^\top \in \operatorname{Prox}_{\gamma \lp f  - \tr{\vT\, \cdot} +g_0\rp}(\overline{\vC}).
\end{equation}
\item[(ii)]  If $\psi$ is convex, then
\begin{equation}
\label{eq:proxfg0}
\prox_{\gamma \lp f  - \tr{\vT\, \cdot} +g_0\rp}(\overline{\vC})=
 \vU \Diag\Big(\!\prox_{\gamma\lp \varphi + \psi\rp}\lp\bm{\lambda}\rp\!\Big)\vU^\top.
\end{equation}
\end{enumerate}
 \end{proposition}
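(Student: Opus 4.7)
The plan is to reduce the matrix proximity problem to the spectral reformulation already handled by \autoref{thh:P}. The starting point is the algebraic identity
\begin{equation}
f(\vC) - \tr{\vT\vC} + g_0(\vC) + \frac{1}{2\gamma}\|\vC-\overline{\vC}\|_{\rm F}^2
= f(\vC) + g_0(\vC) + \frac{1}{2\gamma}\|\vC - (\overline{\vC}+\gamma\vT)\|_{\rm F}^2 + \kappa,
\end{equation}
where $\kappa\in\mathbb{R}$ depends only on $\overline{\vC}$, $\vT$ and $\gamma$. Expanding the right-hand quadratic term and absorbing $\frac{1}{2\gamma}\|\vC\|_{\rm F}^2$ into a new spectral objective, the minimization in \eqref{eq:prox} is equivalent to
\begin{equation}
\minimize{\vC\in\sm} \widetilde f(\vC) - \tr{\widetilde\vT\,\vC} + g_0(\vC),
\end{equation}
with $\widetilde f(\vC) = f(\vC) + \frac{1}{2\gamma}\|\vC\|_{\rm F}^2$ and $\widetilde\vT = \gamma^{-1}(\overline{\vC}+\gamma\vT)$. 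Since $\|\vC\|_{\rm F}^2$ equals the sum of squared eigenvalues of $\vC$, $\widetilde f$ is a spectral function with associated vector function $\widetilde\varphi = \varphi + \frac{1}{2\gamma}\|\cdot\|^2$, and the eigenvalues of $\widetilde\vT$ are exactly $\bm{\lambda}/\gamma$ in the eigenbasis $\vU$ from the statement.

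Next I would invoke \autoref{thh:P} on this reformulated problem. Its coercivity hypothesis on $\vd\mapsto\widetilde\varphi(\vd) + \psi(\vd) - \vd^\top(\bm{\lambda}/\gamma)$ is automatic here because $\varphi\in\Gamma_0(\mathbb{R}^n)$ is minorized by an affine function, the hypothesis of (i) provides the same for $\psi$, and the quadratic contribution $\frac{1}{2\gamma}\|\vd\|^2$ inside $\widetilde\varphi$ then forces coercivity regardless of the sign of $\bm{\lambda}/\gamma$. Lower semicontinuity is inherited from $\varphi$ and $\psi$, so \autoref{thh:P} applies and gives a minimizer of the form $\widehat{\vC} = \vU\,\Diag(\widehat{\vd})\,\vU^\top$ whose spectral factor $\widehat{\vd}$ minimizes
\begin{equation}
\vd \mapsto \varphi(\vd) + \psi(\vd) + \tfrac{1}{2\gamma}\|\vd\|^2 - \vd^\top(\bm{\lambda}/\gamma)
= \varphi(\vd) + \psi(\vd) + \tfrac{1}{2\gamma}\|\vd-\bm{\lambda}\|^2 - \tfrac{1}{2\gamma}\|\bm{\lambda}\|^2.
\end{equation}
Dropping the additive constant, this is exactly $\operatorname{Prox}_{\gamma(\varphi+\psi)}(\bm{\lambda})$, which establishes (i).

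For (ii), the extra convexity of $\psi$ combined with $\varphi\in\Gamma_0(\mathbb{R}^n)$ and Lewis's correspondence makes $f+g_0 \in \Gamma_0(\sm)$; adding the strongly convex quadratic $\frac{1}{2\gamma}\|\cdot - \overline{\vC}\|_{\rm F}^2$ produces a strictly convex, coercive, proper lsc matrix objective, so the left-hand side of \eqref{eq:proxfg0} is a singleton. On the vector side, $\varphi+\psi+\frac{1}{2\gamma}\|\cdot-\bm{\lambda}\|^2$ is likewise strictly convex, so $\operatorname{Prox}_{\gamma(\varphi+\psi)}(\bm{\lambda})$ reduces to a single point. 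The characterization from (i) then promotes the inclusion to equality, yielding \eqref{eq:proxfg0}. The main technical point to be careful about is verifying coercivity of the vector objective under the weak hypothesis of (i); my strategy is to exploit the fact that the quadratic remains after reformulation, which makes the argument essentially self-contained once the algebraic rearrangement is in place.
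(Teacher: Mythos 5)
Your proposal is correct and follows essentially the same route as the paper's proof: both absorb the quadratic term into an augmented spectral function $\widetilde f = f + \tfrac{1}{2\gamma}\|\cdot\|_{\rm F}^2$ with $\widetilde\varphi = \varphi + \tfrac{1}{2\gamma}\|\cdot\|^2$, obtain coercivity from strong convexity plus the affine minorant of $\psi$, apply \autoref{thh:P}, and identify the resulting vector problem with $\operatorname{Prox}_{\gamma(\varphi+\psi)}(\bm{\lambda})$, with part (ii) following by single-valuedness on both the vector and matrix sides via Lewis's correspondence. No gaps.
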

 \begin{proof}
(i) Since it has been assumed that $f$ and $g_0$ are spectral functions, we have
\begin{equation}
(\forall \vC\in \sm) \quad f(\vC) + g_0(\vC) = \varphi(\vd) + \psi(\vd),
\end{equation}
where $\vd\in\mathbb{R}^n$ is a vector of the eigenvalues of $\vC$. 
It can be noticed that minimizing \eqref{eq:prox} is obviously equivalent to minimize
$\widetilde{f}- \gamma^{-1} \tr{(\overline{\vC}+\gamma \vT} \,\cdot)+g_0$
where $\widetilde{f} = f+ \|\cdot\|_{\rm F}^2/(2\gamma)$.
Then 
\begin{equation}
\widetilde{f}(\vC) = \widetilde{\varphi}(\vd),
\end{equation}
where $\widetilde{\varphi} = \varphi +\|\cdot\|^2/(2\gamma)$.
Since we have assumed that $\varphi \in \Gamma_0(\mathbb{R}^n)$,
$\widetilde{\varphi}$ is proper, lower-semicontinuous, and strongly convex.
As $\psi$ is lower bounded by an affine function, it follows that 
\begin{equation}
\label{eq:deftildevarphipluspsi}
\vd \mapsto \widetilde{\varphi}(\vd)-\gamma^{-1} \bm{\lambda}^\top \vd +\psi(\vd)
\end{equation}
is lower bounded by a strongly convex function and it is thus coercive.
In addition, $\dom \widetilde{\varphi} = \dom \varphi$, hence $\dom \widetilde{\varphi}\cap \dom \psi \neq \varnothing$.
Let us now apply \autoref{thh:P}.
Let $\widehat{\bm{\lambda}}$ be a minimizer of \eqref{eq:deftildevarphipluspsi}.
It can be claimed that  $\widehat{\vC} = \vU \Diag(\widehat{\bm{\lambda}})\vU^\top$
is a minimizer of \eqref{eq:prox}. On the other hand, minimizing \eqref{eq:deftildevarphipluspsi} is equivalent to
minimize $\gamma(\varphi+\psi) + \frac12 \|\cdot - \bm{\lambda}\|^2$, which shows that 
$\widehat{\bm{\lambda}} \in \operatorname{Prox}_{\gamma\lp \varphi + \psi\rp}\lp\bm{\lambda}\rp$.

(ii) If $\psi \in \Gamma_0(\mathbb{R}^n)$, then it is lower bounded by an affine function \cite[Theorem~9.20]{Bauschke:2017}.
Furthermore, $\varphi+\psi \in \Gamma_0(\mathbb{R}^n)$ 
and the proximity operator of $\gamma\lp \varphi + \psi\rp$ is thus single valued. On the other hand, we also have 
$\gamma \lp f  - \tr{\vT\, \cdot}\right.$ $\left.+g_0\rp \in \Gamma_0(\sm)$ \cite[Corollary 2.7]{Lewis96}, and the proximity operator of this function
is single valued too. The result directly follows from (i).\qed

\end{proof}
We will next focus on the use of \autoref{prop:proxDR} for three choices for $f$, namely the classical squared Frobenius norm, the minus $\log\det$ functional, and the Von Neumann entropy, each choice being coupled with various possible choices for $g_0$.

\subsubsection{Squared Frobenius Norm}
\label{subsub:fro}
A suitable choice in Problem \eqref{eq:prob} is $f = \|\cdot\|_{\rm{F}}^2/2$~\cite{Zhou14,Richard:2012:ESS:3042573.3042584,chartrand12}.
The squared Froebenius norm is the spectral function associated with the function $\varphi = \|\cdot\|^2/2$.

It is worth mentioning that this choice for $f$ allows us to rewrite the original Problem \eqref{eq:prob} under the form \eqref{eq:breg},
where
\begin{equation}
\label{eq:BregFro}
\big(\forall (\vC,\vY)\in \sm^2\big)\quad
D^f(\vC,\vY)
= \frac12 \|\vC-\vY\|_{\rm F}^2.
\end{equation}
We have thus re-expressed Problem \eqref{eq:prob} as the determination of a proximal point of function $g$ 
at $\vT$ in the Frobenius metric.

Table \ref{tab:froeb} presents several examples of spectral functions $g_0$
and the expression of the proximity operator 
of $\gamma (\varphi+\psi)$  with $\gamma \in ]0,+\infty[$. These expressions were established by using the properties of proximity operators
of functions defined on $\mathbb{R}^n$ (see \cite[Example 4.4]{0266-5611-23-4-008} and \cite[Tables 10.1 and 10.2]{combettes:hal-00643807}).

\begin{center}
\small
\begin{table}[htbp]
\caption{Proximity operators of $\gamma (\frac12 \|\cdot\|_{\rm F}^2+g_0)$ with $\gamma > 0$ evaluated at symmetric matrix with vector of eigenvalues
$\bm{\lambda}=(\lambda_i)_{1\le i \le n}$.
For the inverse Schatten penalty, the function is set to $+\infty$ when the argument
$\vC$ is not positive definite.
$E_1$ denotes the set of matrices in $\sm$ with Frobenius norm less than or equal to $\alpha$ and $E_2$ the set of matrices in $\sm$ with eigenvalues between $\alpha$
and $\beta$.
In the last line, the $i$-th component of the proximity operator is obtained by searching among the nonnegative roots of a third order polynomial
those minimizing $\lambda'_i \mapsto \frac12 (\lambda'_i -|\lambda_i|)^2+\gamma\big(\frac12 (\lambda'_i)^2 + \mu \log((\lambda'_i)^2+\varepsilon)\big)$.
}
\label{tab:froeb}
\resizebox{\columnwidth}{!}{%
\begin{tabular}{c|c}

\toprule
$g_0(\vC), \, \mu>0$ 
& $\prox_{\gamma(\varphi+ \psi)}(\bm{\lambda})$		\\
\toprule
\toprule
Nuclear norm 					
&   \multirow{2}{*}{$\lp\soft_{\frac{\mu\gamma}{\gamma+1}}\lp\frac{\lambda_i}{\gamma+1}\rp\rp_{1\le i \le n}$}\\
$\mu \mathcal{R}_1(\vC)$ 				
&  \\
\midrule
Frobenius norm 			
&\multirow{2}{*}{$\lp 1-\frac{\gamma \mu}{\| \bm{\lambda} \|}\rp \frac{\bm{\lambda}}{1+\gamma}$ if $\| \bm{\lambda} \| > \gamma\mu$ and $\bm{0}$ otherwise}

\\
$\mu \|\vC\|_{\rm F}$
& \\
\midrule
Squared Frobenius norm 			
&\multirow{2}{*}{$\displaystyle\frac{\bm{\lambda}}{1+\gamma\lp1+2\mu\rp}$} \\
$\mu \|\vC\|_{\rm F}^2$
& \\
\midrule
Schatten $3$--penalty 			
&\multirow{2}{*}{$\displaystyle (6\gamma\mu)^{-1}\lp\operatorname{sign}\lp\lambda_i\rp\sqrt{(\gamma+1)^2+ 12|\lambda_i|\gamma\mu}-\gamma-1\rp_{1\le i \le n}$} \\
$\mu \mathcal{R}_3^3(\vC)$ 
& \\
\midrule
Schatten $4$--penalty 			
&\multirow{3}{*}{$(8\gamma\mu)^{-1/3}\lp\displaystyle \sqrt[3]{\lambda_i+\sqrt{\lambda_i^2+\zeta}}+ \sqrt[3]{\lambda_i-\sqrt{\lambda_i^2+\zeta}}\rp_{1\le i \le n}$ with $\zeta = \frac{\lp\gamma+1\rp^3}{27\gamma\mu}$} \\
\multirow{2}{*}{$\mu \mathcal{R}_4^4(\vC)$} 
& \\
&\\
\midrule
Schatten $4/3$--penalty 			
&\multirow{2}{*}{$\frac{1}{1+\gamma}\lp \lambda_i+ \frac{4\gamma \mu}{3\sqrt[3]{2(1+\gamma)}} \Big( \sqrt[3]{\sqrt{\lambda_i^2+\zeta}-\lambda_i}- \sqrt[3]{\sqrt{\lambda_i^2+\zeta}+\lambda_i} \Big) \rp_{1\le i \le n}  $} \\
\multirow{3}{*}{$\mu \mathcal{R}_{4/3}^{4/3}(\vC)$} 
& \multirow{3}{*}{with $\zeta = \frac{256(\gamma \mu)^3}{729(1+\gamma)}$}\\
&\\
\midrule
Schatten $3/2$--penalty 			
&\multirow{3}{*}{$\frac{1}{1+\gamma}\lp \lambda_i + \frac{9\gamma^2\mu^2}{8(1+\gamma)} \operatorname{sign}(\lambda_i) \Big(1- \sqrt{1+ \frac{16(1+\gamma)}{9\gamma^2\mu^2} 
|\lambda_i |}\Big)\rp_{1\le i \le n} $} \\
\multirow{2}{*}{$\mu \mathcal{R}_{3/2}^{3/2}(\vC)$} 
& \\
&\\
\midrule
Schatten $p$--penalty			
& $\big(\operatorname{sign}(\lambda_i)d_i\big)_{1\le i \le n}$\\ 
$\mu \mathcal{R}_p^p(\vC)$, $p \geq 1$	
& with $(\forall i \in \{1,\ldots,n\})$ $d_i \ge 0$ and $\mu\gamma p d_i^{p-1} +(\gamma+1)d_i=\lambda_i$ \\
\midrule
Inverse Schatten $p$--penalty			
& $\big(d_i\big)_{1\le i \le n}$\\ 
$\mu \mathcal{R}_p^p(\vC^{-1})$, $p > 0$	
& with $(\forall i \in \{1,\ldots,n\})$ $d_i > 0$ and $ (\gamma+1)d_i^{p+2} -\lambda_i d_i^{p+1}=\mu\gamma p $ \\
\midrule
Bound on the Frobenius norm 			
&\multirow{2}{*}{$\displaystyle\alpha\frac{\bm{\lambda}}{\| \bm{\lambda} \|}$ if $\| \bm{\lambda} \| > \alpha(1+\gamma)$ and $\displaystyle\frac{\bm{\lambda}}{1+\gamma}$ otherwise, $\alpha \in [0,+\infty[$}

\\
$\iota_{E_1}(\vC)$
& \\
\midrule
Bounds on eigenvalues	
& \multicolumn{1}{c}{\multirow{2}{*}{$\lp\min(\max(\lambda_i / (\gamma+1),\alpha),\beta)\rp_{1\le i \le n}$, $[\alpha,\beta] \subset [-\infty,+\infty]$}}\\
$\iota_{E_2}(\vC)$  & \\
\midrule\midrule
Rank 				
&  	 \multicolumn{1}{c}{\multirow{2}{*}{$\lp\hard_{\sqrt{\frac{2\mu\gamma}{1+\gamma}}}\lp \displaystyle\frac{\lambda_i}{1+\gamma}\rp\rp_{1\le i \le n}$}}\\
$\mu \operatorname{rank}(\vC)$ 				
& \\

\midrule
 Cauchy 				
 & $\in \big\{(\operatorname{sign}(\lambda_i)d_i)_{1\le i \le n}\mid (\forall i \in \{1,\ldots,n\})\; d_i \ge 0$ and\\
$\mu \log\det(\vC^2+\varepsilon \Id)$, $\varepsilon > 0$	
& $\qquad(\gamma+1)d_i^3-|\lambda_i| d_i^2 + \big(2\gamma\mu+\varepsilon(\gamma+1)\big) d_i = |\lambda_i|\varepsilon \big\}$\\
\bottomrule
\end{tabular}
}
\end{table} 
\end{center}

\begin{rem}
Another option for $g_0$ is to choose it equal to $\mu \|\cdot\|_{\rm S}$ where $\mu \in ]0,+\infty[$. 
For every $\gamma \in ]0,+\infty[$, we have then
\begin{equation}
(\forall \bm{\lambda} \in \mathbb{R}^n)\qquad 
\prox_{\gamma\lp \varphi + \psi\rp}\lp\bm{\lambda}\rp = \prox_{\frac{\mu \gamma}{1+\gamma}\|\cdot\|_{+\infty}}\lp\frac{\bm{\lambda}}{1+\gamma}\rp,
\end{equation}
where $\|\cdot\|_{+\infty}$ is the infinity norm of $\mathbb{R}^n$.
By noticing that $\|\cdot\|_{+\infty}$ is the conjugate function of the indicator function of $B_{\ell^1}$, the unit $\ell^1$ ball centered at 0 of $\mathbb{R}^n$,
and using Moreau's decomposition formula, \cite[Proposition 24.8(ix)]{Bauschke:2017} yields
\begin{equation}
(\forall \bm{\lambda} \in \mathbb{R}^n)\qquad 
\prox_{\gamma\lp \varphi + \psi\rp}\lp\bm{\lambda}\rp =\frac{1}{1+\gamma}\lp \bm{\lambda}-\mu \gamma
\proj_{B_{\ell^1}}\lp \frac{\bm{\lambda}}{\mu\gamma} \rp\rp.
\end{equation}
The required projection onto $B_{\ell^1}$ can be computed through efficient algorithms \cite{doi:10.1137/080714488,Condat2016}.

\end{rem}

\subsubsection{Logdet Function}
\label{subsub:log}
Another popular choice for $f$ is the negative logarithmic determinant function~\cite{DBLP:conf/uai/2008,NIPS2010_0109,Ma:2013:ADM:2494250.2494257,meinshausen2006,Banerjee:2008:MST:1390681.1390696,Friedman07,doi:10.1093/biomet/asm018,chandrasekaran2012}, which is defined as follows 
\begin{equation}
\label{eq:logdetfunc}
(\forall \vC \in \sm) \quad 
f(\vC) = 
\begin{cases}
-\log\det(\vC) & \mbox{if $\vC \in\smp$}\\
+\infty & \mbox{otherwise.}
\end{cases}
\end{equation}
The above function satisfies property \eqref{eq:g0} with
\begin{equation}
\label{eq:logdetfuncspec}
\big(\forall \bm{\lambda} = (\lambda_i)_{1\le i \le n} \in \mathbb{R}^n\big) \qquad 
\varphi(\bm{\lambda}) = 
\begin{cases}
-\displaystyle\sum_{i=1}^n\log(\lambda_i) & \mbox{if $\bm{\lambda} \in ]0,+\infty[^n$}\\
+\infty & \mbox{otherwise.}
\end{cases}
\end{equation}
Actually, for a given positive definite matrix, the value of function \eqref{eq:logdetfunc} simply reduces to the Burg entropy of  its eigenvalues. Hereagain, 
if $\vY \in \smp$ and $\vT = -\vY^{-1}$,
we can rewrite Problem \eqref{eq:prob} under the form \eqref{eq:breg}, so that it becomes equivalent to the computation of the proximity operator of $g$
with respect to the Bregman divergence given by

\begin{equation}
(\forall \vC \in \sm) \quad  D^f(\vC,\vY) =
\begin{cases}
\displaystyle\log\Big(\frac{\det(\vY)}{\det(\vC)}\Big) + \tr{\vY^{-1}\vC}-n & \mbox{if $\vC \in\smp$}\\
+ \infty & \mbox{otherwise.}
\end{cases}
\end{equation}

 In Table \ref{tab:logdet}, we list some particular choices for $g_0$, 
 and provide the associated closed form expression of the proximity operator $\prox_{\gamma (\varphi+ \psi)}$ for $\gamma \in ]0,+\infty[$, where $\varphi$ is defined in \eqref{eq:logdetfuncspec}. These expressions were derived from \cite[Table 10.2]{combettes:hal-00643807}.

\begin{rem}
Let $g_0$ be any of the convex spectral functions listed in Table \ref{tab:logdet}.
Let $\mathbf{W}$ be an invertible matrix in $\mathbb{R}^{n\times n}$, and let $\overline{\vC} \in \sm$
From the above results,  one can deduce the minimizer of 
$\vC \mapsto \gamma(f(\vC)+g_0(\mathbf{W}\vC \mathbf{W}^\top))+\frac12\| \mathbf{W}\vC \mathbf{W}^\top - \overline{\vC}\|_{\rm F}^2$
 where $\gamma \in ]0,+\infty[$. Indeed, by making a change of variable and by using basic properties of the $\log \det$ function,
this minimizer is equal to $\mathbf{W}^{-1}\prox_{\gamma(f+g_0)}(\overline{\vC}) (\mathbf{W}^{-1})^\top$.
\end{rem}

\begin{table}[h]
\caption{Proximity operators of $\gamma(f +g_0)$ with $\gamma > 0$ and $f$ given by \eqref{eq:logdetfunc}, evaluated at a symmetric matrix with vector of eigenvalues 
$\bm{\lambda}=(\lambda_i)_{1\le i \le n}$. For the inverse Schatten penalty, the function is set to $+\infty$ when the argument
$\vC$ is not positive definite.
$E_2$ denotes the set of matrices in $\sm$ with eigenvalues between $\alpha$
and $\beta$.
In the last line, the $i$-th component of the proximity operator is obtained by searching among the positive roots of a fourth order polynomial
those minimizing $\lambda'_i \mapsto \frac12 (\lambda'_i - \lambda_i )^2+\gamma\big(\mu \log((\lambda'_i)^2+\varepsilon)-\log \lambda'_i\big)$.
}
\label{tab:logdet}
\resizebox{\columnwidth}{!}{
\begin{tabular}{c|c}
\toprule	
$g_0(\vC)$, $\mu>0$	
& $\prox_{\gamma(\varphi + \psi)}(\bm{\lambda})$		\\

\toprule	
\toprule
Nuclear norm 				
&\multicolumn{1}{c}{\multirow{2}{*}{$\frac12 \left(\lambda_i-\gamma\mu +\sqrt{(\lambda_i-\gamma\mu )^2+4\gamma }\right)_{1\le i \le n}$}} \\
$\mu \mathcal{R}_1(\vC)$ 				
& \\
\midrule
Squared Frobenius norm 			
& 	\multicolumn{1}{c}{\multirow{2}{*}{$\displaystyle\frac{1}{2(2\gamma\mu+1)}\Big(\lambda_i+\sqrt{\lambda_i^2+4\gamma(2\gamma\mu+1)}\Big)_{1\le i \le n}$}	}			\\
$\mu \|\vC\|_{\rm F}^2$ 
& \\
\midrule
Schatten $p$--penalty			
& $\big(d_i\big)_{1\le i \le n}$\\ 
$\mu \mathcal{R}_p^p(\vC)$, $p \geq 1$	
& with $(\forall i \in \{1,\ldots,n\})$ $d_i > 0$ and $\mu\gamma p d_i^p +d_i^2-\lambda_i d_i=\gamma $ \\
\midrule
Inverse Schatten $p$--penalty			
& $\big(d_i\big)_{1\le i \le n}$\\ 
$\mu \mathcal{R}_p^p(\vC^{-1})$, $p > 0$	
& with $(\forall i \in \{1,\ldots,n\})$ $d_i > 0$ and $ d_i^{p+2} -\lambda_i d_i^{p+1}-\gamma d_i^p=\mu\gamma p $ \\
\midrule
Bounds on eigenvalues	
& \multicolumn{1}{c}{\multirow{2}{*}{$\lp\min\!\Big(\!\max\Big(\frac12 \big( \lambda_i + \sqrt{\lambda_i^2 +4\gamma}\big),\alpha\Big),\beta\Big)\rp_{1\le i \le n}$, $[\alpha,\beta] \subset [0,+\infty]$}}\\
$\iota_{E_2}(\vC)$  & \\
\midrule\midrule

 Cauchy 				
 
 & $\in \big\{(d_i)_{1\le i \le n}\mid (\forall i \in \{1,\ldots,n\})\; d_i > 0$ and\\
$\mu \log\det(\vC^2+\varepsilon \Id)$, $\varepsilon > 0$	
& $\qquad\qquad d_i^4-\lambda d_i^3 + \big(\varepsilon +\gamma(2\mu-1)\big)d_i^2 -\varepsilon\lambda_i d_i =\gamma\varepsilon \big\}$\\
\bottomrule
\end{tabular}
}
\end{table}

\subsubsection{Von Neumann Entropy}
\label{subsub:neu}
Our third example is the negative Von Neumann entropy, which appears to be useful in some
quantum mechanics problems \cite{bengtsson2006}. It is defined as
\begin{equation}
\label{eq:neu}
(\forall \vC \in \sm) \quad 
f(\vC) = 
\begin{cases}
\tr{\vC\log(\vC)} & \mbox{if $\vC \in\smsp$}\\
+\infty & \mbox{otherwise.}
\end{cases}
\end{equation} 
In the above expression, if $\vC = \vU \Diag(\bm{\lambda})\vU^\top$ with $\bm{\lambda}=(\lambda_i)_{1\le i \le n}\in  ]0,+\infty[^n$
and $\vU \in \mathcal{O}_n$, then  $\log(\vC) = \vU \Diag\big((\log\lambda_i)_{1\le i \le n}\big) \vU^\top$.
The logarithm of  a symmetric definite positive matrix is uniquely defined and
the function $\vC \mapsto \vC\log(\vC)$ can  be extended by continuity on $\smsp$ similarly to the case when $n=1$.
Thus, $f$ is the spectral function associated with

\begin{equation}
\label{eq:neuspec}
\big(\forall \bm{\lambda} = (\lambda_i)_{1\le i \le n} \in \mathbb{R}^n\big) \qquad 
\varphi(\bm{\lambda}) = 
\begin{cases}
\displaystyle\sum_{i=1}^n \lambda_i\log(\lambda_i) & \mbox{if }\bm{\lambda} \in [0,+\infty[^n\\
+\infty & \mbox{otherwise.}
\end{cases}
\end{equation}
Note that the Von Neumann entropy defined for symmetric matrices is simply equal to the well--known Shannon entropy \cite{cover2006elements} of the input eigenvalues. With this choice for function $f$, 
by setting $\vT = \log(\vY) + \Id$ where $\vY \in \smp$, Problem \eqref{eq:prob} can be recast under the form \eqref{eq:breg}, so that it becomes equivalent to the computation of the proximity operator of $g$ with respect to the Bregman divergence associated with the Von Neumann entropy:
\begin{multline*}
(\forall \vC \in \sm) \quad  D^f(\vC,\vY)  = \\
\begin{cases}
\tr{\vC\log(\vC)-\vY\log(\vY)-\lp\log(\vY) + \Id\rp\lp \vC-\vY\rp} & \mbox{if $\vC \in \smsp$}\\
+ \infty & \mbox{otherwise}.
\end{cases}
\end{multline*}

We provide in Table \ref{tab:vann} a list of closed form expressions of the proximity operator of $\gamma(f + g_0)$ for several choices of the spectral function $g_0$. 

\begin{table}[h]
\caption{Proximity operators of $\gamma(f +g_0)$ with $\gamma > 0$ and $f$ given by \eqref{eq:neu}, evaluated at a symmetric matrix with vector of eigenvalues 
$\bm{\lambda}=(\lambda_i)_{1\le i \le n}$. $E_2$ denotes the set of matrices in $\sm$ with eigenvalues between $\alpha$
and $\beta$. $\rm{W}(\cdot)$ denotes the W-Lambert function \cite{Corless1996}.}
\label{tab:vann}
\resizebox{\columnwidth}{!}{
\begin{tabular}{c|c}
\toprule
$g_0(\vC)$, $\mu > 0$ 				
& $\prox_{\gamma(\varphi+\psi)}(\bm{\lambda})$ 		\\
\toprule
\toprule
Nuclear norm 				
& \multicolumn{1}{c}{\multirow{2}{*}{$\gamma\lp{\rm W} \lp \frac{1}{\gamma}\exp\lp \frac{\lambda_i}{\gamma}-\mu-1\rp\rp\rp_{1\le i \le n}$}}\\
$\mu \mathcal{R}_1(\vC)$ 				
&  \\
\midrule
Squared Frobenius norm 			
&\multicolumn{1}{c}{\multirow{2}{*}{$\frac{\gamma}{2\mu\gamma+1}\lp{\rm W} \lp \frac{2\mu\gamma+1}{\gamma}\exp\lp \frac{\lambda_i}{\gamma}-1\rp\rp\rp_{1\le i \le n}$}} \\
$\mu \|\vC\|^2_{\rm F}$	
&  \\
\midrule
Schatten $p$--penalty 			
& $\big(d_i\big)_{1\le i \le n}$\\ 
$\mu \mathcal{R}_p^p(\vC)$, $p \geq 1$	
& with $(\forall i \in \{1,\ldots,n\})$ $d_i > 0$ and $p\mu\gamma d_i^{p-1} + d_i +\gamma\log d_i + \gamma = \lambda_i$ \\					
\midrule
Bounds on eigenvalues	
& \multicolumn{1}{c}{\multirow{2}{*}{$\lp\min\lp\max\lp\gamma{\rm W}\Big(\frac{1}{\gamma}\exp\Big(\frac{\lambda_i}{\gamma}-1\Big)\Big),\alpha\rp,\beta\rp\rp_{1\le i \le n}$, $[\alpha,\beta] \subset [0,+\infty]$}}\\
$\iota_{E_2}(\vC)$  & \\
\midrule\midrule
Rank				
& $(d_i)_{1\le i \le n}$ with\\
$\mu \operatorname{rank}(\vC)$ 				
&  $(\forall i \in \{1,\ldots,n\})\;\;d_i=\begin{cases} \rho_i & \mbox{if $\rho_i > \chi$}\\ 
0\,\text{or}\,\rho_i & \mbox{if $\rho_i = \chi$}\\ 0 & \text{otherwise}\end{cases}\mbox{ and} \begin{cases}\chi = \sqrt{\gamma(\gamma+2 \mu)}-\gamma, 
\\ \rho_i = \gamma {\rm W} \lp \frac{1}{\gamma}\exp\lp \frac{\lambda_i}{\gamma}-1\rp \rp\end{cases} $\\\\
\bottomrule
\end{tabular}
}
\end{table}

\subsection{Douglas-Rachford Algorithm}
\label{sub:DRA}
We now propose a Douglas-Rachford (DR) approach (\cite{doi:10.1137/0716071,combettes:hal-00643807,combettes:hal-00621820}) for numerically solving Problem \eqref{eq:prob}. The DR 
method minimizes the sum of $f -\tr{\vT \cdot} + g_0$ and $g_1$ by alternately computing proximity operators of each of these functions. 
Proposition \ref{prop:proxDR} allows us to calculate the proximity operator of $\gamma(f -\tr{\vT \cdot} + g_0)$ with $\gamma \in ]0,+\infty[$, by possibly using the expressions 
listed in Tables \ref{tab:froeb}, \ref{tab:logdet}, and \ref{tab:vann}. Since $g_1$ is not a spectral function, $\prox_{\gamma g_1}$ has to be derived from other expressions
of proximity operators. For instance, if $g_1$ is a separable sum of functions of its elements, e.g. $g = \| \cdot\|_1$, standard expressions for the proximity operator of vector functions
can be employed  \cite{0266-5611-23-4-008,combettes:hal-00643807}.\footnote{See also \textsf{http://proximity-operator.net}.}

\begin{algorithm}[h]
\caption{Douglas--Rachford Algorithm for solving Problem \eqref{eq:prob}}\label{al:DR}
\begin{algorithmic}[1]
\STATE Let  $\vT$ be a given matrix in $\sm$, set $\gamma>0$ and $\vC^{(0)}\in\sm$.
\FOR{$k=0,1,\dots$}
\STATE Diagonalize $\vC^{(k)}+\gamma \vT$, i.e. find $\vU^{(k)} \in \mathcal{O}_n$ and $\bm{\lambda}^{(k)} \in \mathbb{R}^n$ such that
$$\vC^{(k)}+\gamma \vT=\vU^{(k)}\Diag(\bm{\lambda}^{(k)})(\vU^{(k)})^\top$$
\STATE $\vd^{(k+\frac12)} \in \operatorname{Prox}_{\gamma \lp \varphi + \psi\rp}\left( \bm{\lambda}^{(k)}\right)$ 
\STATE $\vC^{(k+\frac12)} = \vU^{(k)}\Diag(\vd^{(k+\frac12)})(\vU^{(k)})^\top$
\STATE Choose $\alpha^{(k)}\in [0,2]$
\STATE $ \vC^{(k+1)} \in \vC^{(k)} + \alpha^{(k)} \Big(\operatorname{Prox}_{\gamma g_1}(2\vC^{(k+\frac12)}-\vC^{(k)})-\vC^{(k+\frac12)}\Big)$. 
\ENDFOR
\end{algorithmic}
\end{algorithm}
The computations to be performed are  summarized in Algorithm \ref{al:DR}. We state a convergence theorem in the matrix framework, which is an 
offspring of  existing results in arbitrary Hilbert spaces (see, for example, \cite{combettes:hal-00643807} and \cite[Proposition 3.5]{pesquet:hal-00790702}).
\begin{theorem}
\label{thh:DR}
Let $f$ and $g_0$ be functions satisfying  \eqref{eq:fphi} and \eqref{eq:g0}, respectively, where $\varphi\in \Gamma_0(\mathbb{R}^n)$
 and $\psi\in \Gamma_0(\mathbb{R}^n)$. Let $g_1 \in \Gamma_0(\sm)$ be such that $f-\tr{\vT \cdot}+g_0+g_1$ is coercive. Assume that 
 the intersection of the relative interiors of the domains of $f+g_0$ and $g_1$ is non empty. 
 Let $(\alpha^{(k)})_{k\ge 0}$ be a sequence in $[0,2]$ such that $\sum_{k=0}^{+\infty} \alpha^{(k)}(2-\alpha^{(k)}) = +\infty$.
 Then, the sequences    $(\vC^{(k+\frac12)})_{k \geq 0}$ and $\big(\operatorname{prox}_{\gamma g_1}(2\vC^{(k+\frac12)}-\vC^{(k)})\big)_{k \geq 0}$ generated by Algorithm~\ref{al:DR} converge to a solution to Problem~\eqref{eq:prob} where $g=g_0+g_1$.
\end{theorem}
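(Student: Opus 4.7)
The plan is to recognize Algorithm \ref{al:DR} as a concrete matrix-space realization of the standard relaxed Douglas--Rachford splitting applied to the two-function sum $h = h_1 + g_1$, where
\[
h_1 = f -\tr{\vT\,\cdot} + g_0.
\]
First I would check that $h\in \Gamma_0(\sm)$ with $\operatorname{Argmin} h \neq \varnothing$. By \cite[Corollary~2.7]{Lewis96}, the spectral functions $f$ and $g_0$ both belong to $\Gamma_0(\sm)$ whenever $\varphi,\psi \in \Gamma_0(\mathbb{R}^n)$; the linear term $-\tr{\vT\,\cdot}$ is continuous and convex on $\sm$; and $g_1\in \Gamma_0(\sm)$ by assumption. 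Since $-\tr{\vT\,\cdot}$ is finite everywhere, $\dom h_1 = \dom(f+g_0)$, so the standing hypothesis $\operatorname{ri}(\dom(f+g_0))\cap \operatorname{ri}(\dom g_1) \neq \varnothing$ rephrases as $\operatorname{ri}(\dom h_1) \cap \operatorname{ri}(\dom g_1) \neq \varnothing$; in particular $h$ is proper, lsc, convex, and the assumed coercivity then gives $\operatorname{Argmin} h \neq \varnothing$.

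Next I would identify Steps~3--5 of Algorithm \ref{al:DR} with the evaluation of $\prox_{\gamma h_1}(\vC^{(k)})$. This is exactly the content of Proposition \ref{prop:proxDR}(ii): since $\psi\in\Gamma_0(\mathbb{R}^n)$, after diagonalizing $\vC^{(k)}+\gamma\vT = \vU^{(k)}\Diag(\bm{\lambda}^{(k)})(\vU^{(k)})^\top$ one has
\[
\prox_{\gamma h_1}(\vC^{(k)}) = \vU^{(k)}\Diag\bigl(\prox_{\gamma(\varphi+\psi)}(\bm{\lambda}^{(k)})\bigr)(\vU^{(k)})^\top = \vC^{(k+\frac12)}.
\]
Step~7 then becomes the classical relaxed DR update
\[
\vC^{(k+1)} = \vC^{(k)} + \alpha^{(k)}\bigl(\prox_{\gamma g_1}(2\vC^{(k+\frac12)} - \vC^{(k)}) - \vC^{(k+\frac12)}\bigr),
\]
so that the algorithm is literally the DR iteration for $(h_1,g_1)$ in the finite-dimensional Hilbert space $(\sm,\|\cdot\|_{\rm F})$.

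Finally, I would invoke a standard DR convergence result, such as \cite{combettes:hal-00643807} or \cite[Proposition~3.5]{pesquet:hal-00790702}. Its hypotheses are: (a) both summands are in $\Gamma_0$; (b) a qualification condition ensuring $\partial(h_1+g_1)=\partial h_1 + \partial g_1$ and $\operatorname{Argmin}(h_1+g_1) \neq \varnothing$; (c) the relaxation condition $\sum_k \alpha^{(k)}(2-\alpha^{(k)}) = +\infty$. Items (a) and (c) are immediate, and (b) follows from the qualification hypothesis and coercivity established above. The cited theorem then delivers weak convergence of $(\vC^{(k+\frac12)})_k$ and of $\bigl(\prox_{\gamma g_1}(2\vC^{(k+\frac12)}-\vC^{(k)})\bigr)_k$ to a common minimizer of $h_1+g_1$; because $\dim \sm < +\infty$, weak convergence is strong convergence, and the limit solves Problem~\eqref{eq:prob} with $g=g_0+g_1$.

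The main obstacle is the bookkeeping of the qualification condition and verifying that Proposition \ref{prop:proxDR}(ii) applies under the hypotheses actually granted here (in particular that $\dom\varphi \cap \dom\psi \neq \varnothing$, which follows from $\operatorname{ri}(\dom(f+g_0))\neq\varnothing$ via Lemma~\ref{lem:dom}); the rest of the argument is a direct translation of the well-known Hilbert-space DR convergence theorem to our symmetric-matrix setting.
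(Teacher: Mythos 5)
Your proposal is correct and matches the paper's own treatment: the paper gives no detailed proof of Theorem~\ref{thh:DR}, presenting it as a direct consequence of the standard relaxed Douglas--Rachford convergence results in \cite{combettes:hal-00643807} and \cite[Proposition~3.5]{pesquet:hal-00790702} applied to the splitting $h_1 = f - \tr{\vT\,\cdot} + g_0$ and $g_1$, with the proximity step for $h_1$ supplied by Proposition~\ref{prop:proxDR}(ii). Your verification of the qualification and coercivity hypotheses, and the observation that weak convergence is strong in the finite-dimensional space $\sm$, correctly fill in exactly the details the paper leaves implicit.
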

We have restricted the above  convergence analysis to the convex case. Note however that recent convergence results for the DR algorithm in a non-convex setting are available in \cite{Arago2013,Li2016} for specific choices of the involved functionals.
 
\subsection{Positive Semi-Definite Constraint}
Instead of solving Problem~\eqref{eq:prob},  one may be interested in:
\begin{equation}
\label{e:probdefpos}
\minimize{\vC\in\mathcal{S}_n^+}{f(\vC) -\tr{\vC\vT} + g(\vC)},
\end{equation}
when $\dom f\cap \dom g \not \subset \mathcal{S}_n^+$.
This problem can be recast as minimizing over $\mathcal{S}_n$
$f-\tr{\cdot\vT}+ \widetilde{g}_0 + g_1$ where $\widetilde{g}_0 = g_0 + \iota_{\mathcal{S}_n^+}$.
We are thus coming back to the original formulation where $\widetilde{g}_0$ has been substituted for $g_0$.
In order to solve this problem with the proposed proximal approach, a useful result is stated below. 
\begin{proposition}
\label{prop:proxsemidefpos}
Let $\gamma \in ]0,+\infty[$ and $\overline{\vC}  \in \sm$.
 Let $f$ and $g_0$ be functions satisfying  \eqref{eq:fphi} and \eqref{eq:g0}, respectively, where $\varphi\in \Gamma_0(\mathbb{R}^n)$
 and $\psi\in \Gamma_0(\mathbb{R}^n)$.
Assume that 
 \begin{equation}
 \big(\forall \bm{\lambda}' = (\lambda'_i)_{1\le i \le n} \in \mathbb{R}^n\big)\quad
 \varphi(\bm{\lambda}')+\psi(\bm{\lambda}') = \sum_{i=1}^n \rho_i(\lambda'_i)
 \end{equation}
 where, for every $i\in \{1,\ldots,n\}$, $\rho_i\colon \mathbb{R} \to ]-\infty,+\infty]$ is such that $\dom \rho_i \cap [0,+\infty[ \neq \varnothing$.
Let $\bm{\lambda} = (\lambda_i)_{1\le i \le n} \in \mathbb{R}^n$ and $\vU\in\mathcal{O}_n$ be such that $\overline{\vC}+\gamma \vT = \vU\Diag(\bm{\lambda}) \vU^\top$.
Then
\begin{equation}
\label{e:proxSDP}
\prox_{\gamma \lp f  - \tr{\vT\, \cdot} +\widetilde{g}_0\rp}(\overline{\vC})=
 \vU \Diag\lp\big(\max(0, \prox_{\gamma \rho_i}(\lambda_i))\big)_{1\le i \le n}\rp\vU^\top.
\end{equation}
 \end{proposition}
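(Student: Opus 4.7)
The plan is to reduce Proposition~\ref{prop:proxsemidefpos} to Proposition~\ref{prop:proxDR}(ii) by absorbing the positive semi--definite constraint into the spectral part of the regularizer, and then to exploit the separability hypothesis to compute the resulting one--dimensional proximity operators explicitly.

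\textbf{Step 1: re--express the problem as a spectral one.} I would observe that $\iota_{\mathcal{S}_n^+}$ is itself a spectral function, since $\vC \in \mathcal{S}_n^+$ if and only if its eigenvalues lie in $[0,+\infty[$. Setting $\widetilde{g}_0 = g_0 + \iota_{\mathcal{S}_n^+}$, the function $\widetilde{g}_0$ then satisfies \eqref{eq:g0} with $\psi$ replaced by $\widetilde{\psi} = \psi + \iota_{[0,+\infty[^n}$. Because $\psi \in \Gamma_0(\mathbb{R}^n)$ and $\iota_{[0,+\infty[^n} \in \Gamma_0(\mathbb{R}^n)$, we have $\widetilde{\psi}\in \Gamma_0(\mathbb{R}^n)$. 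Moreover, by the separability assumption, $\varphi+\widetilde{\psi} = \sum_i (\rho_i + \iota_{[0,+\infty[})$ evaluated componentwise, and the hypothesis $\dom \rho_i \cap [0,+\infty[ \neq \varnothing$ guarantees that $\dom \varphi \cap \dom \widetilde{\psi} \neq \varnothing$. All assumptions of Proposition~\ref{prop:proxDR}(ii) are therefore met.

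\textbf{Step 2: apply the spectral formula.} From Proposition~\ref{prop:proxDR}(ii),
\begin{equation}
\prox_{\gamma(f - \tr{\vT\,\cdot}+\widetilde{g}_0)}(\overline{\vC}) = \vU\,\Diag\!\bigl(\prox_{\gamma(\varphi+\widetilde{\psi})}(\bm{\lambda})\bigr)\,\vU^\top,
\end{equation}
where $\bm{\lambda}$ and $\vU$ come from the eigendecomposition of $\overline{\vC}+\gamma\vT$. Since $\varphi+\widetilde{\psi}$ is separable, the proximity operator acts componentwise, so for each $i \in \{1,\ldots,n\}$ I only need to evaluate $\prox_{\gamma(\rho_i + \iota_{[0,+\infty[})}(\lambda_i)$ on $\mathbb{R}$.

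\textbf{Step 3: the one--dimensional constrained prox.} This is the key technical step, and the part where care is needed. I would fix $i$ and consider the strictly convex map $h(\xi) = \rho_i(\xi) + \tfrac{1}{2\gamma}(\xi-\lambda_i)^2$, whose unconstrained minimizer on $\mathbb{R}$ is precisely $\xi^\star = \prox_{\gamma\rho_i}(\lambda_i)$. Since the separability forces each $\rho_i$ to lie in $\Gamma_0(\mathbb{R})$ (inherited from $\varphi+\psi\in\Gamma_0(\mathbb{R}^n)$), $h$ is convex. If $\xi^\star \geq 0$, then $\xi^\star$ is also the constrained minimizer over $[0,+\infty[$, and the claim holds. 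If $\xi^\star<0$, convexity of $h$ together with $h$ being nondecreasing to the right of $\xi^\star$ implies that the constrained minimum on $[0,+\infty[$ is attained at the boundary point $0$. In both cases,
\begin{equation}
\prox_{\gamma(\rho_i + \iota_{[0,+\infty[})}(\lambda_i) = \max\bigl(0, \prox_{\gamma\rho_i}(\lambda_i)\bigr).
\end{equation}

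\textbf{Main obstacle and conclusion.} The main subtlety, and where I would spend the most care, is Step~3: one must justify that the inner convex minimization commutes with the projection onto the nonnegative orthant. This is really a fact about composing a scalar prox with a monotone convex cut, but it relies crucially on the separability assumption which lets us move the constraint coordinate by coordinate. Once this step is in place, substituting the componentwise formula of Step~3 back into the diagonal matrix of Step~2 yields exactly \eqref{e:proxSDP}, completing the proof.
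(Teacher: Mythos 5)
Your argument is correct and follows essentially the same route as the paper: the paper's proof simply invokes \autoref{prop:proxDR}(ii) together with the cited scalar result \cite[Proposition~2.2]{NellyPois}, which is exactly the identity $\prox_{\gamma(\rho_i+\iota_{[0,+\infty[})}=\max(0,\prox_{\gamma\rho_i})$ that you establish directly in Step~3. The only detail worth adding there is that when $\xi^\star<0$ one indeed has $0\in\dom\rho_i$ (because $\xi^\star\in\dom\rho_i$, $\dom\rho_i$ is an interval, and $\dom\rho_i\cap[0,+\infty[\,\neq\varnothing$), so the constrained minimum is genuinely attained at the boundary point $0$.
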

\begin{proof}
Expression \eqref{e:proxSDP} readily follows from \autoref{prop:proxDR}(ii) and \cite[Proposition 2.2]{NellyPois}.\qed
\end{proof}


%
\section{Application to Covariance Matrix Estimation}
\label{sec:MMDR}
Estimating the covariance matrix of a random vector is a key problem in statistics, signal processing over graphs, and machine learning. Nonetheless, in existing optimization techniques, little attention is usually paid to the presence of noise corrupting the available observations. We show in this section how the results obtained in the previous sections can be used to tackle this problem in various contexts.
\subsection{Model and Proposed Approaches}
\label{sub:model}
Let $\vS \in \smsp$ be a sample estimate of a covariance matrix $\boldsymbol{\Sigma}$ which is assumed to
be decomposed as
\begin{equation}
\label{eq:xcor}
\boldsymbol{\Sigma}  = \vY^* + \sigma^2 \Id
\end{equation}
where $\sigma \in [0,+\infty[$ and $\vY^*\in \smsp$ may have a low-rank structure.
Our objective in this section will be to propose variational methods to provide an estimate of $\vY^*$ from $\vS$
by assuming that $\sigma$ is known.
Such a problem arises when considering the following observation model \cite{sun17}:
\begin{equation}
\label{eq:palomar}
(\forall i \in \lg1,\ldots,N\rg) \quad \iter{\vx}{}{(i)} = \vA\iter{\vs}{}{(i)} +\iter{\ve}{}{(i)}
\end{equation}
where $\vA\in\rd^{n\times m}$ with $m\le n$ and, for every $i \in \left\{1,\ldots,N\right\}$, $\iter{\vs}{}{(i)} \in \rd^m$ and $\iter{\ve}{}{(i)} \in \rd^n$ are realizations of mutually 
independent identically distributed Gaussian multivalued random variables with zero mean and covariance matrices $\vP\in \mathcal{S}_m ^{++}$ and $\sigma^2\Id$, respectively. This model has been employed for instance in \cite{Tipping01,wipf04} in the context of the ``Relevant Vector Machine problem''. The covariance matrix $\boldsymbol{\Sigma}$ of the  noisy input data $\left(\iter{\vx}{}{(i)}\right)_{1 \leq i \leq N}$ takes the 
form \eqref{eq:xcor} with $\vY^* = \vA\vP\vA^\top$.

On the other hand, a simple estimate of $\boldsymbol{\Sigma}$ from the observed data $\left(\iter{\vx}{}{(i)}\right)_{1 \leq i \leq N}$ is
\begin{equation}
\label{eq:covformula}
\vS = \frac{1}{N}\sum_{i=1}^N \vx^{(i)} \big(\vx^{(i)}\big)^\top.
\end{equation}
\paragraph*{Covariance-based model.\,} A first estimate $\widehat{\vY}$ of $\vY^*$ is given by
\begin{equation}
\label{eq:palF}
\widehat{\vY} =\argmin{\vY \in\smsp} \frac{1}{2}\|\vY- \vS+\sigma^2\Id\|_{\rm F}^2 + g_0(\vY) + g_1(\vY),
\end{equation}
where $\vS$ is the empirical covariance matrix, $g_0$ satisfies \eqref{eq:g0} with $\psi\in \Gamma_0(\mathbb{R}^n)$, $g_1 \in \Gamma_0(\sm)$, and
the intersection of the relative interiors of the  domains of $g_0$ and $g_1$ is assumed to be non empty.

A particular instance of this model with $\sigma=0$, $g_0=\mu_0 \mathcal{R}_1$, $g_1= \mu_1\|\cdot\|_1$, and $(\mu_0,\mu_1) \in [0,+\infty[^2$ was investigated in  \cite{Zhou14} and \cite{Richard:2012:ESS:3042573.3042584}  for estimating sparse low-rank  covariance matrices. In the latter reference, an application to real data processing arising from protein interaction and social network analysis is presented. 

One can observe that Problem \eqref{eq:palF} takes the form  \eqref{e:probdefpos}
by setting $f = \frac{1}{2}\|\cdot \|_{\rm F}^2 $ and $\vT = \vS-\sigma^2\Id$. This allows us to solve \eqref{eq:palF} with Algorithm \ref{al:DR}. Since it is assumed that $g_0$ satisfies \eqref{eq:g0},
the proximity step on $f+g_0+\iota_{\smsp}$ can be performed by employing \autoref{prop:proxsemidefpos} and formulas from \autoref{tab:froeb}. 
The resulting Douglas--Rachford procedure can thus be viewed as an alternative to the methods developed in 
\cite{Richard:2012:ESS:3042573.3042584} and 
\cite{Zhou14}. Let us emphasize that these two algorithms were devised to solve an instance of \eqref{eq:palF} 
corresponding to the aforementioned specific choices for $g_0$ and $g_1$, while our approach leaves more freedom in the choice of the regularization functions.  
\paragraph*{Precision-based model.\,}
An alternative strategy consists of focusing on the estimation of the inverse of the covariance matrix, i.e. the \emph{precision} matrix 
$\vC^* = (\vY^*)^{-1}$ by assuming that $\vY^* \in \smp$ but may have very small eigenvalues in order to model a possible low-rank structure.
Tackling the problem from this viewpoint leads us to propose the following penalized negative log-likelihood cost function: 
\begin{equation}
\label{eq:prec}
(\forall \vC \in \sm) \qquad \mathcal{F}(\vC) =f(\vC) +\mathcal{T}_\vS\lp\vC\rp  +g_0(\vC) + g_1(\vC)
\end{equation}
where
\begin{align}
(\forall \vC \in \sm) \qquad f(\vC) = \begin{cases}
\log\det\lp \vC^{-1} + \sigma^2\Id \rp  & \mbox{if $\vC\in\smp$}\\
+\infty & \mbox{otherwise,}
\end{cases}
\label{eq:logterm}\\
(\forall \vC \in \sm) \qquad \mathcal{T}_\vS(\vC) = \begin{cases}
\tr{\lp \Id + \sigma^2 \vC \rp^{-1}\vC\vS}  & \mbox{if $\vC\in\smsp$}\\
+\infty & \mbox{otherwise,}
\end{cases}
\label{eq:trterm}
\end{align}

$g_0\in\Gamma_0(\sm)$ satisfies \eqref{eq:g0} with $\psi\in \Gamma_0(\rd^n)$, and $g_1 \in \Gamma_0(\sm)$. 
Typical choices of interest for the latter two functions are
\begin{equation}
\label{eq:g0inv}
(\forall \vC \in \sm) \qquad g_0(\vC) = \begin{cases}
\mu_0 \mathcal{R}_1(\vC^{-1}) & \mbox{if $\vC\in\smp$}\\
+\infty & \mbox{otherwise,}
\end{cases}
\end{equation}
and $g_1 = \mu_1 \|\cdot\|_1$ with $(\mu_0,\mu_1) \in [0,+\infty[^2$. The first function serves to promote a desired low-rank property by penalizing small eigenvalues of the precision matrix, whereas the second one enforces the sparsity of this matrix as it is usual in graph inference problems. This constitutes a main difference with respect to the covariance-based model which is more suitable to estimate sparse covariance matrices. Note that the standard graphical lasso framework \cite{Friedman07} is then recovered by setting $\sigma = 0$ and $\mu_0 = 0$.
The advantage of our formulation is that it allows us to consider more flexible variational models while accounting for the presence of noise corrupting the observed data.
The main difficulty however is that Algorithm \autoref{al:DR} cannot be directly applied to minimize $\mathcal{F}$. In \autoref{sub:analysis}, we will study in more details the properties of the cost function. This will allow us to derive a novel optimization algorithm making use of our previously developed Douglas-Rachford scheme for its inner steps

\subsection{Study of Objective Function $\mathcal{F}$}
\label{sub:analysis}
The following lemma will reveal useful in our subsequent analysis.
\begin{lemma}
\label{lem:h}
Let $\sigma \in ]0,+\infty[$.
Let $h\colon ]0,\sigma^{-2}[ \to \rd$ be a twice differentiable function
and let
\begin{equation}
u \colon [0,+\infty[ 
\to \rd
\colon \lambda \mapsto 
\frac{\lambda}{1+\sigma^2 \lambda}. 
\label{eq:u}
\end{equation}

 The composition $h\circ u$ is convex on $]0,+\infty[$ if and only if
\begin{equation}
(\forall \upsilon \in ]0,\sigma^{-2}[) \quad \ddot{h}(\upsilon)(1-\sigma^2 \upsilon)-2\sigma^2 \dot{h}(\upsilon)\geq 0,
\end{equation}
where $\dot{h}$ (resp. $\ddot{h}$) denotes the first (resp. second) derivative of $h$.
\end{lemma}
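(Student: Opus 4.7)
The plan is to prove this via a direct second-derivative computation using the chain rule, exploiting the fact that $u$ is a $C^\infty$ diffeomorphism from $]0,+\infty[$ onto $]0,\sigma^{-2}[$, hence $h \circ u$ is twice differentiable on $]0,+\infty[$ and convexity there is equivalent to $(h\circ u)''\ge 0$ pointwise.

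First I would compute the derivatives of $u$: a direct calculation gives $\dot u(\lambda) = (1+\sigma^2\lambda)^{-2}$ and $\ddot u(\lambda) = -2\sigma^2(1+\sigma^2\lambda)^{-3}$. Writing $\upsilon = u(\lambda)$, the chain rule yields
\begin{equation*}
(h\circ u)''(\lambda) = \ddot h(\upsilon)\,\dot u(\lambda)^2 + \dot h(\upsilon)\,\ddot u(\lambda) = \frac{\ddot h(\upsilon)}{(1+\sigma^2\lambda)^4} - \frac{2\sigma^2\,\dot h(\upsilon)}{(1+\sigma^2\lambda)^3}.
\end{equation*}

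Next I would re-express $1+\sigma^2\lambda$ in terms of $\upsilon$. Solving $\upsilon = \lambda/(1+\sigma^2\lambda)$ for $\lambda$ gives $\lambda = \upsilon/(1-\sigma^2\upsilon)$, whence $1+\sigma^2\lambda = (1-\sigma^2\upsilon)^{-1}$. Substituting this back and factoring produces
\begin{equation*}
(h\circ u)''(\lambda) = \frac{1}{(1+\sigma^2\lambda)^4\,(1-\sigma^2\upsilon)}\,\bigl[\ddot h(\upsilon)(1-\sigma^2\upsilon) - 2\sigma^2\,\dot h(\upsilon)\bigr].
\end{equation*}
Since $(1+\sigma^2\lambda)^{4} > 0$ and, for $\upsilon \in ]0,\sigma^{-2}[$, $1-\sigma^2\upsilon > 0$, the prefactor is strictly positive, so the sign of $(h\circ u)''(\lambda)$ is exactly the sign of $\ddot h(\upsilon)(1-\sigma^2\upsilon) - 2\sigma^2\,\dot h(\upsilon)$.

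To conclude, I would invoke the bijection $u\colon\,]0,+\infty[\,\to\,]0,\sigma^{-2}[$ (which is clear since $\dot u > 0$, $u(0^+)=0$, $u(+\infty)=\sigma^{-2}$): the condition $(h\circ u)''(\lambda)\ge 0$ for every $\lambda > 0$ is equivalent to $\ddot h(\upsilon)(1-\sigma^2\upsilon) - 2\sigma^2\dot h(\upsilon)\ge 0$ for every $\upsilon\in ]0,\sigma^{-2}[$, which is the claimed equivalence. I do not anticipate a real obstacle here: the only mild care needed is the algebraic manipulation converting $1+\sigma^2\lambda$ into $(1-\sigma^2\upsilon)^{-1}$, and the observation that both positivity factors are needed to ensure that the sign of the second derivative is dictated solely by the bracketed expression.
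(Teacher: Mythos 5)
Your proof is correct and is precisely the argument the paper intends: the paper's own proof consists of the single sentence that the result ``directly follows from the calculation of the second-order derivative of $h\circ u$,'' and your computation of $(h\circ u)''$, the substitution $1+\sigma^2\lambda=(1-\sigma^2\upsilon)^{-1}$, and the observation that $u$ is an increasing bijection of $]0,+\infty[$ onto $]0,\sigma^{-2}[$ are exactly the details being elided. No gaps.
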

\begin{proof}
The result directly follows from the calculation of the second-order derivative of $h\circ u$.\qed
\end{proof}
Let us now note that $f$ is a spectral function fulfilling \eqref{eq:fphi} with 
\begin{equation}
\label{eq:spectralnewlog}
\big(\forall \bm{\lambda} = (\lambda_i)_{1\le i \le n} \in \mathbb{R}^n\big) \qquad 
\varphi(\bm{\lambda}) = 
\begin{cases}
-\displaystyle\sum_{i=1}^n \log\big(u(\lambda_i)\big)& \mbox{if }\bm{\lambda} \in ]0,+\infty[^n\\
+\infty & \mbox{otherwise,}
\end{cases}
\end{equation}
where $u$ is defined by \eqref{eq:u}. 
According to \autoref{lem:h} (with $h = -\log$), $f \in \Gamma_0(\sm)$.
Thus, the assumptions made on $g_0$ and $g_1$, 
allow us to deduce that $f + g_0+g_1$ is convex and lower-semicontinuous on $\sm$. 

Let us now focus on the properties of the second term in \eqref{eq:prec}.

\begin{lemma}
Let $\vS\in\smsp$. The function $\mathcal{T}_\vS$ in \eqref{eq:trterm} is concave on $\smsp$.
\label{lemma:trace}
\end{lemma}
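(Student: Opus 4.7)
The plan is to reduce the concavity statement to the well-known matrix convexity of the inversion map on $\smp$. The case $\sigma = 0$ is trivial, because $\mathcal{T}_\vS$ then coincides with the linear map $\vC \mapsto \tr{\vC\vS}$. For $\sigma > 0$ and any $\vC \in \smsp$, the matrix $\Id + \sigma^2\vC$ lies in $\smp$ and commutes with $\vC$, so a direct computation gives the algebraic identity
\begin{equation*}
(\Id + \sigma^2\vC)^{-1}\vC = \sigma^{-2}\bigl(\Id - (\Id + \sigma^2\vC)^{-1}\bigr).
\end{equation*}
Multiplying by $\vS$ and taking the trace then yields
\begin{equation*}
\mathcal{T}_\vS(\vC) = \sigma^{-2}\tr{\vS} - \sigma^{-2}\tr{(\Id + \sigma^2\vC)^{-1}\vS},
\end{equation*}
so concavity of $\mathcal{T}_\vS$ on $\smsp$ is equivalent to convexity on $\smsp$ of $\vC \mapsto \tr{(\Id + \sigma^2\vC)^{-1}\vS}$.

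Since $\vC \mapsto \Id + \sigma^2\vC$ is an affine map sending $\smsp$ into the open convex set $\smp$, by composition it suffices to show that, for any fixed $\vS \in \smsp$, the map $F\colon \smp \to \mathbb{R}$, $\vM \mapsto \tr{\vM^{-1}\vS}$, is convex. I would establish this by second-order differentiation along an arbitrary direction $\vH \in \sm$: standard matrix calculus applied to $\frac{d}{dt}(\vM+t\vH)^{-1} = -(\vM+t\vH)^{-1}\vH(\vM+t\vH)^{-1}$ gives
\begin{equation*}
\frac{d^2}{dt^2}F(\vM+t\vH)\Big|_{t=0} = 2\,\tr{\vM^{-1}\vH\vM^{-1}\vH\vM^{-1}\vS}.
\end{equation*}
Setting $\vA = \vM^{-1/2}\vH\vM^{-1/2} \in \sm$ and $\vB = \vM^{-1/2}\vS\vM^{-1/2} \in \smsp$, this quantity equals $2\tr{\vA^2\vB}$, which is nonnegative as the trace of a product of two symmetric positive semi-definite matrices. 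Hence $F$ is convex along every line in the open convex set $\smp$, and therefore convex on $\smp$; the lemma then follows from the reduction above.

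The only delicate point is the second-derivative bookkeeping together with the congruence trick isolating $\vA^2$; everything else is routine. A more abstract alternative would be to invoke directly the operator convexity of $\vM \mapsto \vM^{-1}$ on $\smp$ (L\"owner--Heinz), factor $\vS = \vB^\top\vB$, and use $\tr{\vM^{-1}\vS} = \tr{\vB\vM^{-1}\vB^\top}$ together with the fact that congruence preserves the L\"owner order; either route leads to a short proof.
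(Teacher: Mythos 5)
Your proof is correct, but it takes a genuinely different route from the paper's. The paper attacks $\mathcal{T}_\vS$ head-on: it computes the gradient $\nabla\mathcal{T}_\vS(\vC)=(\Id+\sigma^2\vC)^{-1}\vS(\Id+\sigma^2\vC)^{-1}$ via matrix differentials, then writes the Hessian explicitly as $-\sigma^2\bigl(\nabla\mathcal{T}_\vS(\vC)\otimes(\Id+\sigma^2\vC)^{-1}+(\Id+\sigma^2\vC)^{-1}\otimes\nabla\mathcal{T}_\vS(\vC)\bigr)$ acting on $\vect(\diff\vC)$, and shows it is negative semidefinite by reading off the eigenvalues of the Kronecker products; concavity is first obtained on $\smp$ and then extended to $\smsp$ by continuity. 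You instead exploit the algebraic identity $(\Id+\sigma^2\vC)^{-1}\vC=\sigma^{-2}\bigl(\Id-(\Id+\sigma^2\vC)^{-1}\bigr)$ to reduce everything to the convexity of $\vM\mapsto\tr{\vM^{-1}\vS}$ on $\smp$, which you verify by a one-dimensional second-derivative computation along lines plus the congruence trick $\tr{\vM^{-1}\vH\vM^{-1}\vH\vM^{-1}\vS}=\tr{\vA^2\vB}$ with $\vA^2,\vB\in\smsp$. Both arguments are ultimately second-order, but yours buys three things: the computation is scalar rather than an operator-level Hessian in $\mathcal{S}_{n^2}$; the degenerate case $\sigma=0$ is handled explicitly (the paper's Hessian formula silently covers it, but your linearity observation is cleaner); and since $\vC\mapsto\Id+\sigma^2\vC$ maps all of $\smsp$ into the open set $\smp$, concavity on $\smsp$ comes for free without the paper's closing continuity argument. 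What the paper's version buys in exchange is the explicit gradient formula \eqref{eq:1der}, which is reused later in the MM majorization \eqref{eq:linear_app} and in Algorithm~\ref{al:MMDR}, so the differential computation is not wasted effort there. Your Löwner--Heinz alternative is also valid and would shorten the argument further at the cost of invoking a less elementary fact.
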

\begin{proof} 
By using differential calculus rules in \cite{magnus99}, we will show that the Hessian of $-\mathcal{T}_\vS$ 
evaluated at any matrix in $\smp$
is a positive semidefinite operator.
In order to lighten our notation,  for every invertible matrix $\vC$, let us define  $\vM = \vC^{-1} + \sigma^2\Id $.
Then, the first-order differential of $\mathcal{T}_\vS$ at every $\vC\in \smp$ is
\begin{eqnarray}
\diff\tr{\mathcal{T}_\vS(\vC)}&
=& \tr{\lp\diff \vM^{-1}\rp\vS}\nonumber\\
&=& \tr{-\vM^{-1}(\diff \vM)\vM^{-1}\vS}\nonumber\\
&=& \tr{\lp \vC^{-1} + \sigma^2\Id\rp^{-1}\vS\lp \vC^{-1} + \sigma^2\Id\rp^{-1}\vC^{-1}(\diff \vC)\vC^{-1}} \nonumber\\
&=& \tr{\lp\Id +\sigma^2\vC\rp^{-1}\vS\lp\Id +\sigma^2\vC\rp^{-1}(\diff \vC)}.
\label{eq:gradTsproof}
\end{eqnarray}
We have used the expression of the differential of the inverse
\cite[Chapter~8, Theorem~3]{magnus99} and 

the invariance of the trace with respect to cyclic permutations. 
It follows from \eqref{eq:gradTsproof} that the gradient of $\mathcal{T}_\vS$ reads
\begin{equation}
\label{eq:1der}
(\forall \vC \in \smp) \quad \nabla \mathcal{T}_\vS(\vC) = \lp\Id +\sigma^2\vC\rp^{-1}\vS\lp\Id +\sigma^2\vC\rp^{-1}.
\end{equation}
In order to calculate the Hessian $\mathfrak{H}$ of $\mathcal{T}_\vS$, 
we calculate the differential of $\nabla\mathcal{T}_\vS$.
 Again, in order to simplify our notation, for every matrix $\vC$, we define 
 \begin{equation}
\vN = \Id+\sigma^2\vC \quad \Rightarrow \quad \diff \vN = \sigma^2\diff \vC.
\end{equation}
The differential of $\nabla\mathcal{T}_\vS$ at every $\vC\in \smp$ then reads
\begin{eqnarray}
\diff\vect\lp\nabla \mathcal{T}_\vS(\vC)\rp 
&=& 
\vect\lp \diff (\vN^{-1}\vS\vN^{-1})\rp \nonumber\\
&=& \vect\lp (\diff \vN^{-1})\vS\vN^{-1}+ \vN^{-1}(\diff \vS\vN^{-1})\rp \nonumber\\
&=& -\vect(\vN^{-1}(\diff \vN) \vN^{-1}\vS \vN^{-1}) -\vect\lp \vN^{-1}\vS\vN^{-1}(\diff \vN) \vN^{-1} \rp \nonumber\\
&=& -\lp \lp \vN^{-1}\vS \vN^{-1}\rp^\top\otimes \vN^{-1}\rp\vect(\diff \vN) - \lp \lp \vN^{-1} \rp^\top\otimes \vN^{-1}\vS \vN^{-1} \rp \vect(\diff \vN)\nonumber \\
&=& - \big( \lp \vN^{-1}\vS \vN^{-1}\rp\otimes \vN^{-1} + \vN^{-1} \otimes \lp \vN^{-1}\vS \vN^{-1}\rp  \big) \diff\vect( \vN)\nonumber\\
&= & \mathfrak{H}(\vC) \diff\vect( \vC)\nonumber
\end{eqnarray}
with
\begin{equation}
\label{eq:h}
\mathfrak{H}(\vC)  
= -\sigma^2\lp\nabla\mathcal{T}_\vS\lp\vC\rp\otimes \lp\Id+\sigma^2\vC\rp^{-1} +  \lp\Id+\sigma^2\vC\rp^{-1}\otimes\nabla\mathcal{T}_\vS\lp\vC\rp\rp.
\end{equation}
To derive the above expression, we have used the facts that, for every $\vA \in \mathbb{R}^{n\times m}$, $\vX \in \mathbb{R}^{m \times p}$, and $\vB \in \mathbb{R}^{p \times q}$, $\vect\lp\vA\vX\vB\rp = \lp\vB^\top \otimes \vA \rp\vect \vX$ \cite[Chapter~2,Theorem~2]{magnus99}
and that matrices $\vN$ and $\vS$ are symmetric.

Let us now check that, for every $\vC \in \smp$, $\mathfrak{H}(\vC)$ is negative semidefinite.
It follows from expression \eqref{eq:1der}, the symmetry of $\vC$, and the positive semidefiniteness of $\vS$ that $\nabla\mathcal{T}_\vS(\vC)$ belongs to $\smsp$.
Since 
\begin{align*}
\big(\nabla\mathcal{T}_\vS\lp\vC\rp\otimes \lp\Id+\sigma^2\vC\rp^{-1}\big)^\top &= \big(\nabla\mathcal{T}_\vS\lp\vC\rp\big)^\top \otimes \big(\lp\Id+\sigma^2\vC\rp^{-1}\big)^\top
\\
&= \nabla\mathcal{T}_\vS\lp\vC\rp\otimes \lp\Id+\sigma^2\vC\rp^{-1}, 
\end{align*}
$\nabla\mathcal{T}_\vS\lp\vC\rp\otimes \lp\Id+\sigma^2\vC\rp^{-1}$ is symmetric.
Let us denote by $(\gamma_i)_{1\le i \le n} \in [0,+\infty[^n$ the eigenvalues of $\nabla\mathcal{T}_\vS\lp\vC\rp$ and 
by $(\zeta_i)_{1\le i \le n} \in [0,+\infty[^n$ those of $\vC$. According to \cite[Chapter 2, Theorem 1]{magnus99}, the eigenvalues of
$\nabla\mathcal{T}_\vS\lp\vC\rp\otimes \lp\Id+\sigma^2\vC\rp^{-1}$ are $\big(\gamma_i/(1+\sigma^2 \zeta_j)\big)_{1\le i,j \le n}$ and they are therefore nonnegative.
This allows us to claim that $\nabla\mathcal{T}_\vS\lp\vC\rp\otimes \lp\Id+\sigma^2\vC\rp^{-1}$ belongs to $\mathcal{S}_{n^2}^+$.
For similar reasons, $\lp\Id+\sigma^2\vC\rp^{-1}\otimes\nabla\mathcal{T}_\vS\lp\vC\rp \in \mathcal{S}_{n^2}^+$, which allows us to conclude that $-\mathfrak{H}(\vC)\in \mathcal{S}_{n^2}^+$.
Hence, we have proved that $\mathcal{T}_\vS$ is concave on $\smp$. By continuity of $\mathcal{T}_\vS$ relative to $\smsp$, the concavity property extends on $\smsp$.\qed

\end{proof}
As a last worth mentioning property, $\mathcal{T}_\vS$ is bounded on $\smp$. So, if $\dom f \cap \dom g_0 \cap \dom g_1 \neq \varnothing$ and $f+g_0+g_1$ is coercive, then 
there exists a minimizer of $\mathcal{F}$. Because of the form of $f$, the coercivity condition is satisfied if $g_0+g_1$ is lower bounded and 
$\lim_{\vC \in \smsp, \| \vC \| \to +\infty} g_0(\vC)+g_1(\vC) = +\infty$.

\subsection{Minimization Algorithm for $\mathcal{F}$}
\label{sub:algo}
In order to find a minimizer of $\mathcal{F}$, we propose a \emph{Majorize--Minimize} (MM) approach, following the ideas in \cite{chouzenoux16,sun17,doi:10.1198/0003130042836,Jacobson07}. At each iteration of an MM algorithm, one constructs a tangent function that majorizes the given cost function and is equal to it at the current iterate. The next iterate is obtained by minimizing this tangent majorant function, resulting in a sequence of iterates that reduces the cost function value monotonically.
According to the results stated in the previous section, our objective function reads as a difference of convex terms. We  propose to build a majorizing approximation of function $\mathcal{T}_\vS$ at $\vC' \in \smp$ by exploiting \autoref{lemma:trace} and the classical concavity inequality on $\mathcal{T}_\vS$ :
\begin{equation}
\label{eq:linear_app}
(\forall \vC \in \smp) \quad \mathcal{T}_\vS\lp\vC\rp\leq \mathcal{T}_\vS\lp\vC'\rp + \tr{\nabla\mathcal{T}_\vS(\vC')\lp \vC-\vC'\rp}.
\end{equation}
As $f$ is finite only on $\smp$, a tangent majorant of the cost function \eqref{eq:prec} at $\vC'$ reads:
\[
(\forall \vC \in \sm) \quad \mathcal{G}(\vC \mid \vC') = f\lp\vC\rp + \mathcal{T}_\vS\lp\vC'\rp + \tr{\nabla\mathcal{T}_\vS(\vC')\lp \vC-\vC'\rp} + g_0(\vC) + g_1(\vC).
\]
This leads to the general MM scheme:
\begin{equation}
\label{eq:majo}
(\forall \ell \in \mathbb{N}) \quad \vC^{(\ell+1)}\in \Argmin{\vC\in\sm}f(\vC) + \operatorname{trace}\big(\nabla\mathcal{T}_\vS(\vC^{(\ell)})\vC\big) +g_0(\vC) + g_1(\vC)
\end{equation}
with $\vC^{(0)} \in \smp$. At each iteration of the MM algorithm, we have then to solve a convex optimization problem of the form \eqref{eq:prob}.
In the case when $g_1\equiv 0$, we can employ the procedure described in \autoref{sec:Dec} to perform this task in a direct manner. 
The presence of a regularization term $g_1\not \equiv 0$ usually prevents us to have an explicit solution to the inner minimization problem involved in the MM procedure. We then propose 
in Algorithm \ref{al:MMDR} to resort to the Douglas--Rachford approach in \autoref{sec:DR} to solve it iteratively.
\begin{algorithm}[H]
\caption{MM algorithm with DR inner steps}\label{al:MMDR}
\begin{algorithmic}[1]
\STATE Let $\vS\in\smsp$  be the data matrix. Let $\varphi$ be as in \eqref{eq:spectralnewlog}, let $\psi\in\Gamma_0(\rd^n)$ be associated with $g_0$. Let $(\gamma_\ell)_{\ell \in \mathbb{N}}$ be a sequence in $]0,+\infty[$.
Set $\vC^{(0,0)} = \vC^{(0)}\in\smp$.   
\FOR{$\ell=0,1,\dots$}
\FOR{$k=0,1,\dots$}
\STATE Compute $\vU^{(\ell,k)}\in \mathcal{O}_n$ and $\bm{\lambda}^{(\ell,k)}\in \mathbb{R}^n$ such that $$\vC^{(\ell,k)}-\gamma_\ell \nabla\mathcal{T}_\vS(\vC^{(\ell)})=\vU^{(\ell,k)}\Diag(\bm{\lambda}^{(\ell,k)})\lp \vU^{(\ell,k)}\rp^\top$$ \label{e:startDRalg2}
\STATE $ \vd^{\lp \ell,k+\frac{1}{2}\rp} = \textnormal{prox}_{\gamma_\ell \lp \varphi + \psi\rp}\left( \bm{\lambda}^{(\ell,k)}\right)$ 
\STATE $ \vC^{\lp \ell,k+\frac{1}{2}\rp} = \vU^{(\ell,k)}\Diag\lp\vd^{\lp \ell,k+\frac{1}{2}\rp}\rp\lp\vU^{(\ell,k)}\rp^\top$
\IF{Convergence of MM sub-iteration is reached}\label{s:stopalg2}
\STATE $\vC^{(\ell+1)} = \vC^{(\ell,k+\frac{1}{2})}$
\STATE $\vC^{(\ell+1,0)} = \vC^{(\ell,k)}$
\STATE \textbf{exit} inner loop
\ENDIF
\STATE  Choose $\alpha_{\ell,k}\in]0,2[$
\STATE $ \vC^{(\ell,k+1)} = \vC^{(\ell,k)} + \alpha_{\ell,k}\left( \textnormal{prox}_{\gamma_\ell g_1}\left(2\vC^{\lp \ell,k+\frac{1}{2}\rp}-\vC^{(\ell,k)}\right)-\vC^{(\ell,k+\frac{1}{2})}\right)$ 
\label{e:stopDRalg2}
\ENDFOR
\ENDFOR
\end{algorithmic}
\end{algorithm}

A convergence result is next stated, which is inspired from \cite{Wu_83} (itself relying on \cite[p.~6]{Zangwill_69}), but does not require the differentiability
of $g_0+g_1$.
\begin{theorem}
\label{thh:MMconv}
Let $(\vC^{(\ell)})_{\ell \ge 0}$ be a sequence generated by \eqref{eq:majo}. 
Assume that\linebreak $\dom f \cap \dom g_0 \cap \dom g_1 \neq \varnothing$, $f+g_0+g_1$ is coercive, and 
$E = \{\vC \in \sm\mid \mathcal{F}(\vC) \le \mathcal{F}(\vC^{(0)})\}$ is a subset of the relative interior of $\dom g_0 \cap \dom g_1$.
Then, the following properties hold:
\begin{enumerate}
\item[(i)]  $\big(\mathcal{F}(\vC^{(\ell)})\big)_{\ell \ge 0}$ is a decaying sequence converging
to $\widehat{\mathcal{F}}\in \mathbb{R}$.
\item[(ii)] $(\vC^{(\ell)})_{\ell \ge 0}$ has a cluster point. 
\item[(iii)] Every cluster point $\widehat{\vC}$ of  $(\vC^{(\ell)})_{\ell \ge 0}$
is such that $\mathcal{F}(\widehat{\vC}) = \widehat{\mathcal{F}}$ and it is a critical point of $\mathcal{F}$, i.e.
$-\nabla f(\widehat{\vC}) - \nabla\mathcal{T}_\vS(\widehat{\vC}) \in \partial (g_0+g_1)(\widehat{\vC})$.
\end{enumerate}
\end{theorem}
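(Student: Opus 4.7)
The plan is to follow the classical Majorize--Minimize descent argument organized around the tangent majorant $\mathcal{G}(\cdot \mid \vC')$ introduced just before \eqref{eq:majo}, and to establish the three items in turn.

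For (i), I would exploit two basic facts about the majorant: the concavity of $\mathcal{T}_\vS$ shown in \autoref{lemma:trace} together with \eqref{eq:linear_app} gives $\mathcal{F}(\vC)\le \mathcal{G}(\vC\mid \vC')$ for every $\vC,\vC' \in \smp$, with equality at $\vC = \vC'$. Combined with the fact that $\vC^{(\ell+1)}$ minimizes $\mathcal{G}(\cdot\mid \vC^{(\ell)})$, one obtains the sandwich
\begin{equation}
\mathcal{F}(\vC^{(\ell+1)}) \le \mathcal{G}(\vC^{(\ell+1)}\mid \vC^{(\ell)}) \le \mathcal{G}(\vC^{(\ell)}\mid \vC^{(\ell)}) = \mathcal{F}(\vC^{(\ell)}),
\end{equation}
so $(\mathcal{F}(\vC^{(\ell)}))_\ell$ is nonincreasing. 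Coercivity of $f + g_0 + g_1$ together with boundedness of $\mathcal{T}_\vS$ on $\smsp$ implies coercivity of $\mathcal{F}$, hence boundedness from below, and convergence of the sequence of values to some $\widehat{\mathcal{F}}\in \mathbb{R}$.

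For (ii), the monotonicity just established confines $(\vC^{(\ell)})$ to the sublevel set $E$, which is bounded by coercivity of $\mathcal{F}$; in finite dimensions this yields the existence of a cluster point $\widehat{\vC}$. Before attacking (iii) I would first show that any such $\widehat{\vC}$ actually belongs to $\smp$: since $\mathcal{T}_\vS$ is bounded on $\smsp$ and $g_0+g_1$ is bounded below on the bounded set $E$, the sandwich gives a uniform upper bound on $f(\vC^{(\ell)})$, which through the spectral expression \eqref{eq:spectralnewlog} forces the eigenvalues of $\vC^{(\ell)}$ to stay above some fixed $\varepsilon > 0$; the closedness of this spectral lower-bound condition is then preserved under limits and yields $\widehat{\vC}\in \smp$.

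For (iii), fix a subsequence $\vC^{(\ell_k)}\to \widehat{\vC}\in \smp$ and, after extracting once more from the bounded sequence $(\vC^{(\ell_k+1)})_k$, assume $\vC^{(\ell_k+1)}\to \widetilde{\vC}\in\smp$ as well. Because $E$ is contained in the relative interior of $\dom g_0\cap \dom g_1$, the convex functions $g_0, g_1$ are continuous at $\widehat{\vC}$ and $\widetilde{\vC}$, and since $f$ and $\mathcal{T}_\vS$ are continuous on $\smp$, we obtain $\mathcal{F}(\widehat{\vC}) = \mathcal{F}(\widetilde{\vC}) = \widehat{\mathcal{F}}$. Passing the defining inequality $\mathcal{G}(\vC^{(\ell_k+1)}\mid \vC^{(\ell_k)}) \le \mathcal{G}(\vC\mid \vC^{(\ell_k)})$ to the limit shows that $\widetilde{\vC}$ minimizes $\vC\mapsto \mathcal{G}(\vC\mid\widehat{\vC})$; the key observation is then the chain
\begin{equation}
\mathcal{G}(\widetilde{\vC}\mid \widehat{\vC}) \le \mathcal{G}(\widehat{\vC}\mid \widehat{\vC}) = \mathcal{F}(\widehat{\vC}) = \widehat{\mathcal{F}} = \mathcal{F}(\widetilde{\vC}) \le \mathcal{G}(\widetilde{\vC}\mid \widehat{\vC}),
\end{equation}
which forces $\widehat{\vC}$ itself to minimize $\mathcal{G}(\cdot\mid \widehat{\vC})$. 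Writing the first-order optimality condition for this convex problem at $\widehat{\vC}$, using differentiability of $f$ at points of $\smp$ and the convex sum rule (whose qualification is granted by the relative-interior assumption on $E$), yields the desired critical point condition $-\nabla f(\widehat{\vC}) - \nabla \mathcal{T}_\vS(\widehat{\vC}) \in \partial (g_0+g_1)(\widehat{\vC})$.

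The main obstacle I anticipate is precisely this last step: one must justify that cluster points do not escape $\smp$ (so that $\nabla f$ and $\nabla \mathcal{T}_\vS$ make sense there) and that $\mathcal{F}$ is continuous at $\widehat{\vC}$, so as to upgrade the merely lower-semicontinuous inequality $\mathcal{F}(\widehat{\vC})\le \widehat{\mathcal{F}}$ to an equality. Both rely on the blow-up of $f$ at the boundary of $\smp$ implicit in \eqref{eq:spectralnewlog} and on the hypothesis that $E$ lies inside the relative interior of $\dom g_0\cap \dom g_1$; once these two regularity facts are secured, the remainder is a nonsmooth adaptation of the Zangwill--Wu descent argument to our convex-surrogate setting.
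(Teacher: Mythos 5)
Your proposal is correct and follows essentially the same route as the paper: monotone descent via the tangent majorant, compactness of the sublevel set $E\subset\smp$ from coercivity, double subsequence extraction with continuity of $g_0+g_1$ relative to $E$ (guaranteed by the relative-interior hypothesis) and of $f+\mathcal{T}_\vS$ on $\smp$, and passage to the limit in the surrogate minimization to conclude that $\widehat{\vC}$ minimizes $\mathcal{G}(\cdot\mid\widehat{\vC})$. The only differences are cosmetic: you make explicit the eigenvalue lower bound keeping cluster points in $\smp$ (which the paper gets implicitly from $E$ being a compact subset of $\dom\mathcal{F}\subset\smp$), and you close the argument with a direct equality chain where the paper argues by contradiction.
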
 
\begin{proof}
First note that $(\vC^{(\ell)})_{\ell \ge 0}$ is properly defined by \eqref{eq:majo} since, for every $\vC\in \smp$, $\mathcal{G}(\cdot \mid \vC)$ is a coercive 
lower-semicontinuous function. It indeed majorizes $\mathcal{F}$ which is coercive, since $f+g_0+g_1$ has been assumed coercive.\\
(i) As a known property of MM strategies, $\big(\mathcal{F}(\vC^{(\ell)})\big)_{\ell \ge 0}$ is a decaying sequence \cite{Jacobson07}.
Under our assumptions, we have already seen that $\mathcal{F}$ has a minimizer. We deduce that $\big(\mathcal{F}(\vC^{(\ell)})\big)_{\ell \ge 0}$
is lower bounded, hence convergent.\\
(ii) Since $\big(\mathcal{F}(\vC^{(\ell)})\big)_{\ell \ge 0}$ is a decaying sequence, $(\forall \ell \ge 0)$ $\vC^{(\ell)} \in E$.
Since $\mathcal{F}$ is proper, lower-semicontinuous, and coercive, $E$ is a nonempty compact set
and  $(\vC^{(\ell)})_{\ell \ge 0}$ admits a cluster point in $E$.\\
(iii) If $\widehat{\vC}$  is a cluster point of $(\vC^{(\ell)})_{\ell \ge 0}$, then there exists a subsequence $(\vC^{(\ell_k)})_{k \ge 0}$ converging
to $\widehat{\vC}$. Since $E$ is a nonempty subset of the relative interior of $\dom g_0 \cap \dom g_1$ and $g_0+g_1\in \Gamma_0(\sm)$, $g_0+g_1$ is continuous relative to $E$
\cite[Corollary~8.41]{Bauschke:2017}. As $f+\mathcal{T}_\vS$ is continuous on $\dom f\cap \dom\mathcal{T}_\vS =\smp$, $\mathcal{F}$ is continuous relative to $E$. Hence,
$\widehat{\mathcal{F}} = \lim_{k\to +\infty}  \mathcal{F}(\vC^{(\ell_k)}) = \mathcal{F}(\widehat{\vC})$.
On the other hand, by similar arguments applied to sequence $(\vC^{(\ell_k+1)})_{k \ge 0}$, there exists a subsequence $(\vC^{(\ell_{k_q}+1)})_{q \ge 0}$ converging to some
$\widehat{\vC}' \in E$ such that $\widehat{\mathcal{F}} = \mathcal{F}(\widehat{\vC}')$. In addition, thanks to \eqref{eq:majo}, we have
\begin{equation}
(\forall \vC \in \sm) (\forall q\in \mathbb{N}) \quad  \mathcal{G}(\vC^{(\ell_{k_q}+1)}\mid \vC^{(\ell_{k_q})}) \le \mathcal{G}(\vC\mid \vC^{(\ell_{k_q})}).
\end{equation}
By continuity of $f$ and $\nabla\mathcal{T}_\vS$ on $\smp$ and by continuity of 
$g_0+g_1$ relative to $E$, 
\begin{equation}\label{eq:MMhCphC}
(\forall \vC \in \sm) \quad \mathcal{G}(\widehat{\vC}'\mid \widehat{\vC}) \le \mathcal{G}(\vC\mid \widehat{\vC}).
\end{equation}
Let us now suppose that $\widehat{\vC}$ is not a critical point of $\mathcal{F}$. Since
the subdifferential of $\mathcal{G}(\cdot \mid \widehat{\vC})$ at $\widehat{\vC}$ is
$\nabla f(\widehat{\vC})+\nabla\mathcal{T}_\vS(\widehat{\vC})+\partial (g_0+g_1)(\widehat{\vC})$ \cite[Corollary 16.48(ii)]{Bauschke:2017},
the null matrix does not belong to this subdifferential, which means that $\widehat{\vC}$ is not a minimizer
of $\mathcal{G}(\cdot \mid \widehat{\vC})$ \cite[Theorem 16.3]{Bauschke:2017}. It follows from \eqref{eq:MMhCphC} and standard MM properties 
that $\mathcal{F}(\widehat{\vC}') \le 
\mathcal{G}(\widehat{\vC}'\mid \widehat{\vC}) < \mathcal{G}(\widehat{\vC} \mid \widehat{\vC}) = \mathcal{F}(\widehat{\vC})$.
The resulting strict inequality contradicts the already established fact that $\mathcal{F}(\widehat{\vC}')=\mathcal{F}(\widehat{\vC})$.\qed
\end{proof}

\section{Numerical Experiments}
\label{sec:Num}
This section presents some numerical tests illustrating the validity of the  proposed algorithms. 
More specifically, in \autoref{sub:comp} the Douglas--Rachford (DR) approach of \autoref{sec:DR} is compared with other state--of--the--art algorithms previously mentioned, namely Incremental Proximal Descent (IPD) \cite{Richard:2012:ESS:3042573.3042584} and ADMM \cite{Zhou14}, on a problem of covariance matrix estimation. In \autoref{sub:noisy}, we present an application of the MM approach from \autoref{sec:MMDR} to a graphical lasso problem in the presence of noisy data.
All the experiments were conducted on a MacBook Pro equipped with an Intel Core i7 at 2.2 GHz, 16 Gb of RAM (DDR3 1600 MHz), and Matlab R2015b. 

\subsection{Application to Sparse Covariance Matrix Estimation}
\label{sub:comp}
We first consider the application of the DR algorithm from \autoref{sec:DR} to the sparse covariance matrix estimation problem introduced in \cite{Richard:2012:ESS:3042573.3042584}. The objective is to retrieve an estimate of a low rank covariance matrix $\vY^* \in \mathcal{S}_n^+$ from $N$ noisy realizations $(\vx^{(i)})_{1 \leq i \leq N}$ of a Gaussian multivalued random vector with zero mean and covariance matrix $\vY^* + \sigma^2\Id$, with $\sigma > 0$. As we have shown in \autoref{sub:model}, a solution to this problem can be obtained by solving the penalized least-squares problem \eqref{eq:palF}, where $\vS$ is the empirical covariance matrix defined in \eqref{eq:covformula}, and the regularization terms are $g_0 = \mu_0\mathcal{R}_1$ and $g_1=\mu_1\|\cdot\|_1$. We propose to compare the performance of the DR approach from \autoref{sub:DRA}, with the IPD algorithm \cite{Richard:2012:ESS:3042573.3042584} and the ADMM procedure \cite{Zhou14}, for solving this convex optimization problem.

The synthetic data are generated using a procedure similar to the one in \cite{Richard:2012:ESS:3042573.3042584}. A block-diagonal covariance matrix $\vY^*$ is considered, composed with $r$ blocks with dimensions $(r_j)_{1\le j \le r}$, so that $n = \sum_{j=1}^r r_j$. The $j$-th diagonal block of $\vY^*$ reads as a product $\va_j\va_j^\top$, where the components of $\va_j\in\rd^{r_j}$ are randomly drawn on $[-1,1]$. The number of observations $N$ is equal to $n$ and $\sigma = 0.1$. The three algorithms are initialized with $\vS+\Id$, and stopped as soon as a relative decrease criterion on the objective function is met, i.e. when $|\mathcal{F}_{k+1}-\mathcal{F}_k|/|\mathcal{F}_k|\leq \varepsilon$, $\varepsilon> 0$ being a given tolerance and $\mathcal{F}_k$ denoting the objective function value at iteration $k$. The maximum number of iterations is set to $2000$. The penalty parameters $\mu_1$ and $\mu_0$ are chosen in order to get a reliable estimation of the original covariance matrix. The gradient stepsize for IPD is set to $k^{-1}$. In Algorithm \autoref{al:DR}, $\alpha_k$ is set to 1.5. In ADMM, the initial Lagrange multiplier is set to a matrix with all entries equal to one, and the parameter of the proximal step is set to 1. 

\begin{figure}[htbp]
\begin{center}
\renewcommand*{\thesubfigure}{}
\captionsetup[subfigure]{labelformat=simple}
\subfloat{\label{fig:true100}\includegraphics[scale=0.2]{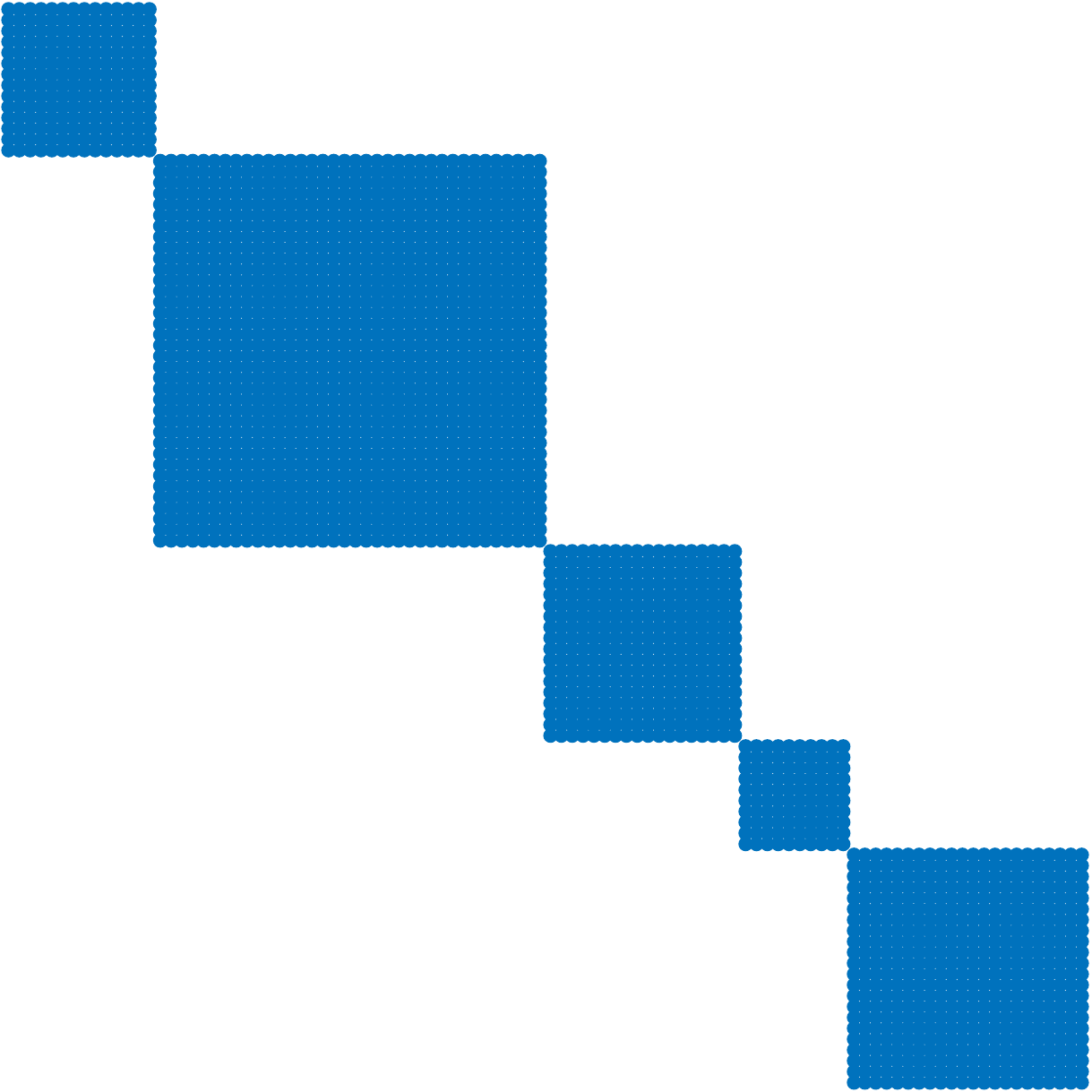}}\qquad \subfloat{\label{fig:DR100}\includegraphics[scale=0.2]{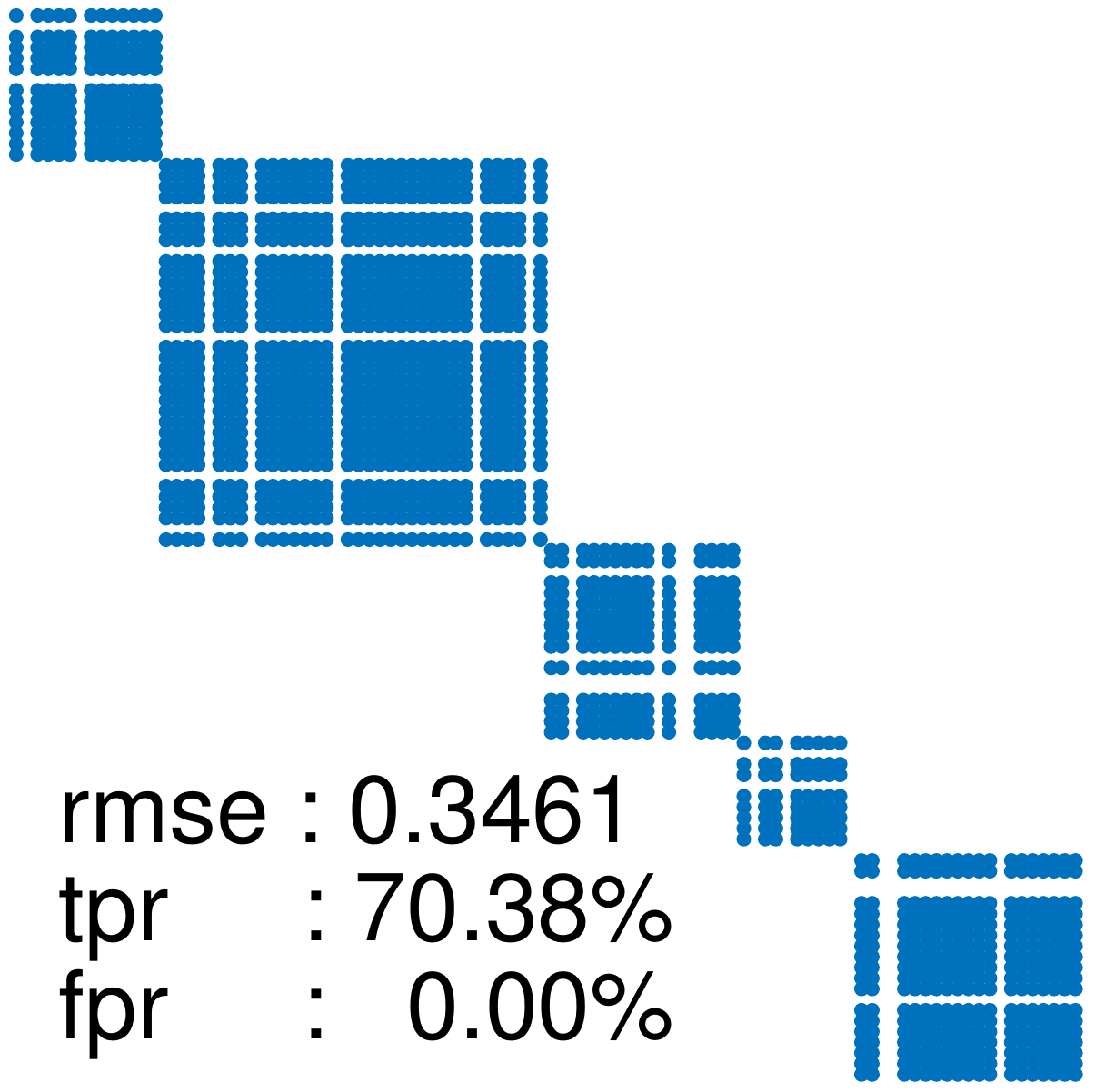}}\qquad \subfloat{\label{fig:ADMM100}\includegraphics[scale=0.2]{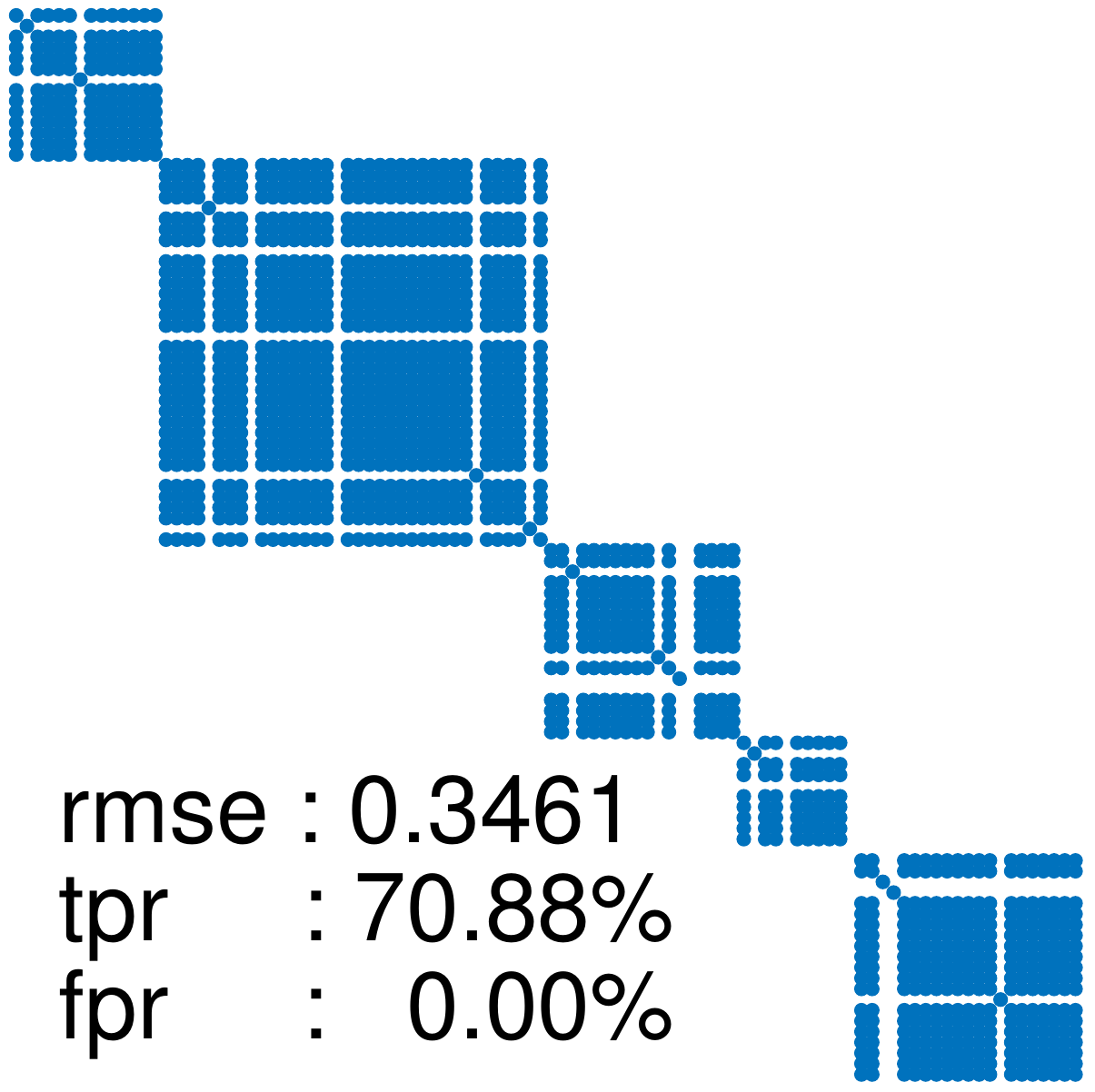}}\qquad \subfloat{\label{fig:IPD100}\includegraphics[scale=0.2]{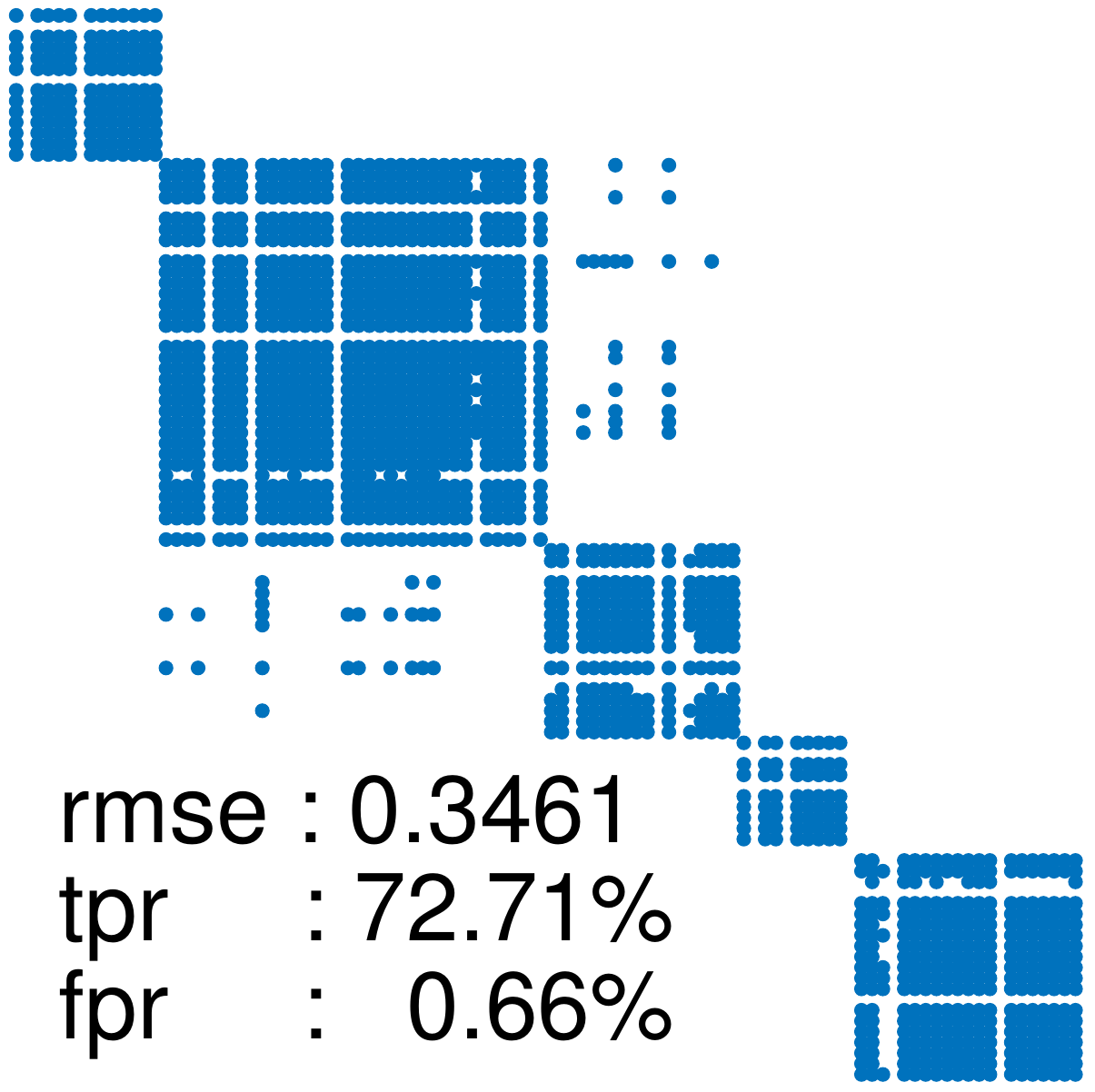}}\\
\subfloat[$\vY^*$]{\label{fig:true300}\includegraphics[scale=0.2]{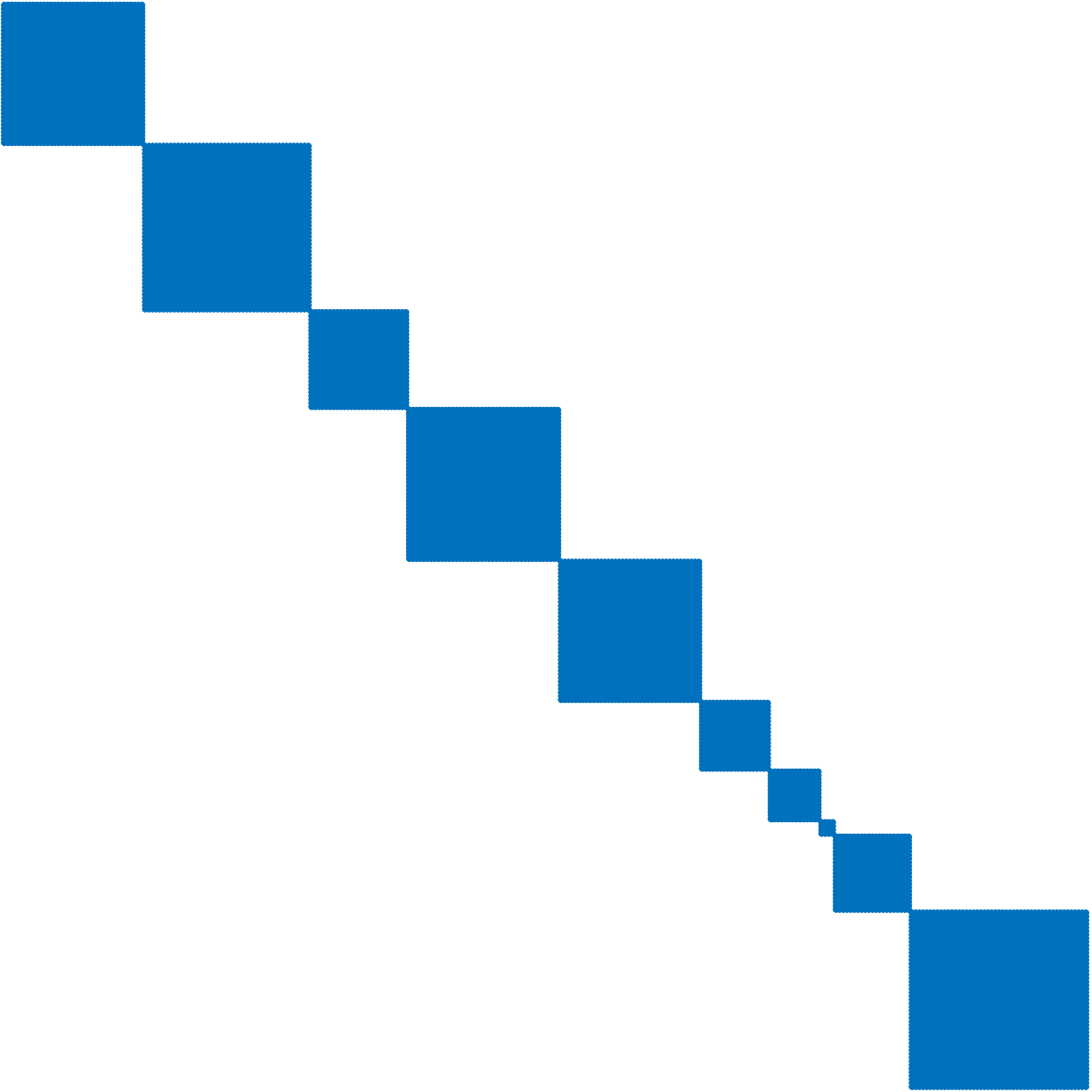}}\qquad \subfloat[DR]{\label{fig:DR300}\includegraphics[scale=0.2]{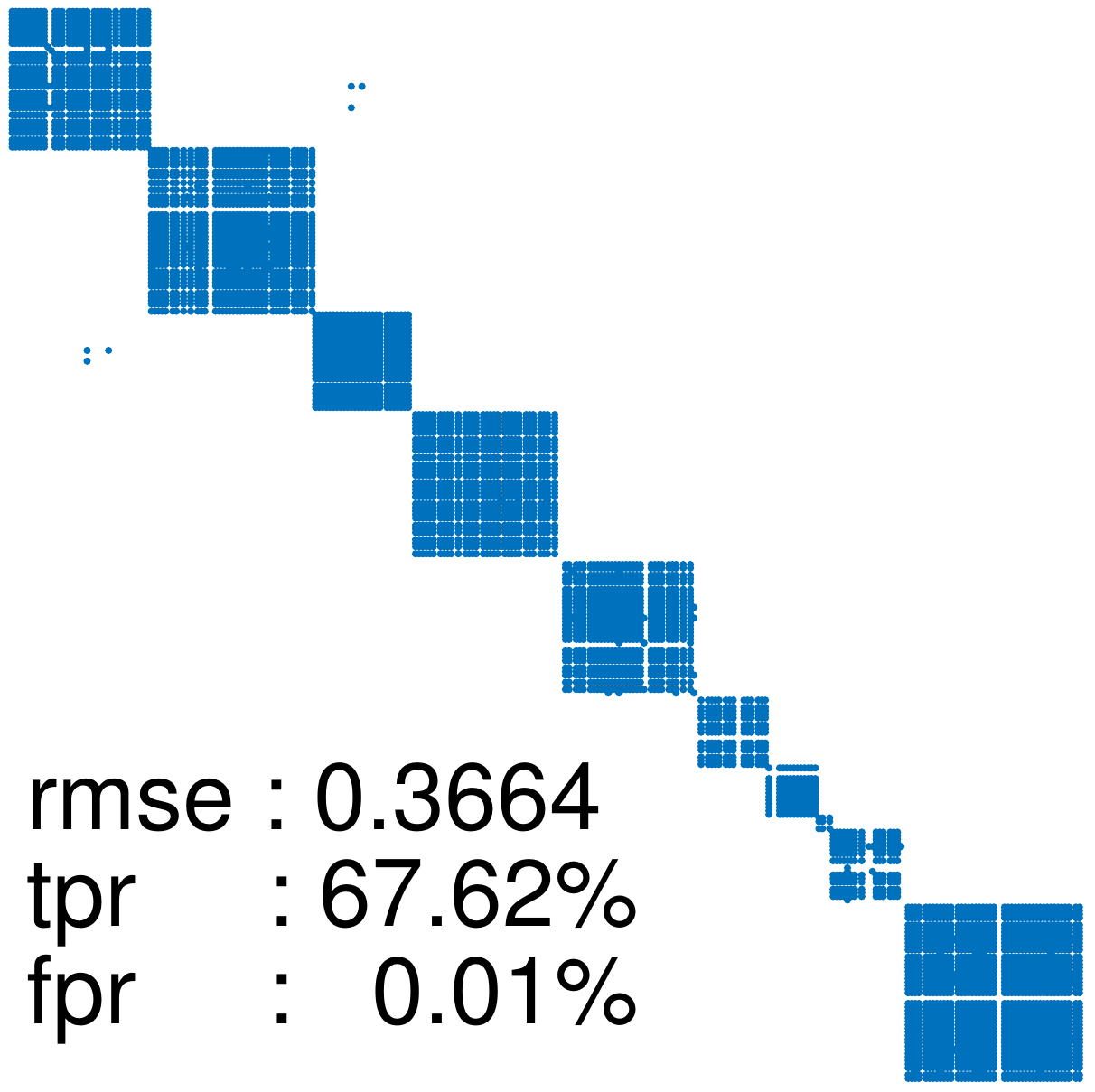}}\qquad \subfloat[ADMM]{\label{fig:ADMM300}\includegraphics[scale=0.2]{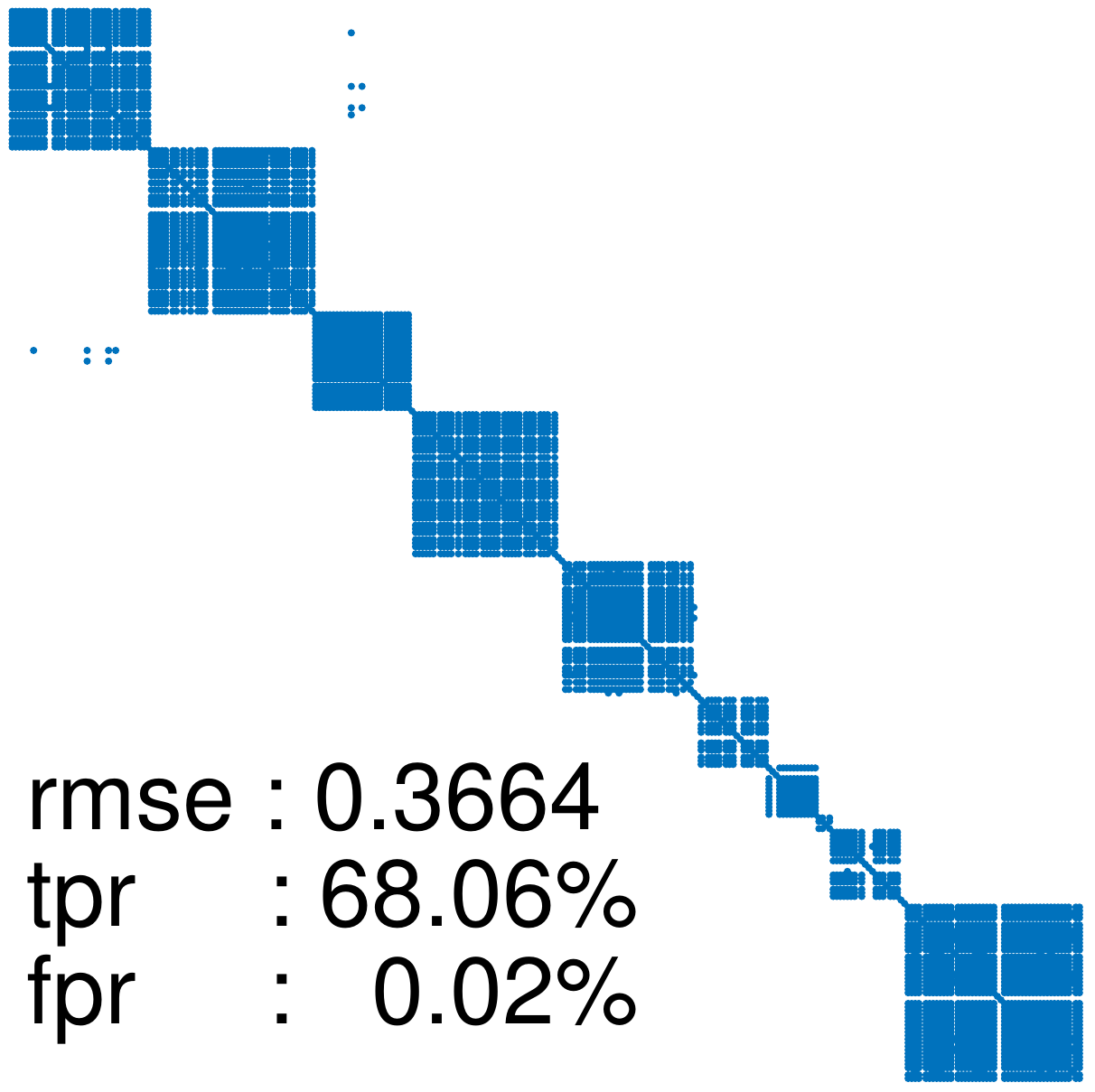}}\qquad \subfloat[IPD]{\label{fig:IPD300}\includegraphics[scale=0.2]{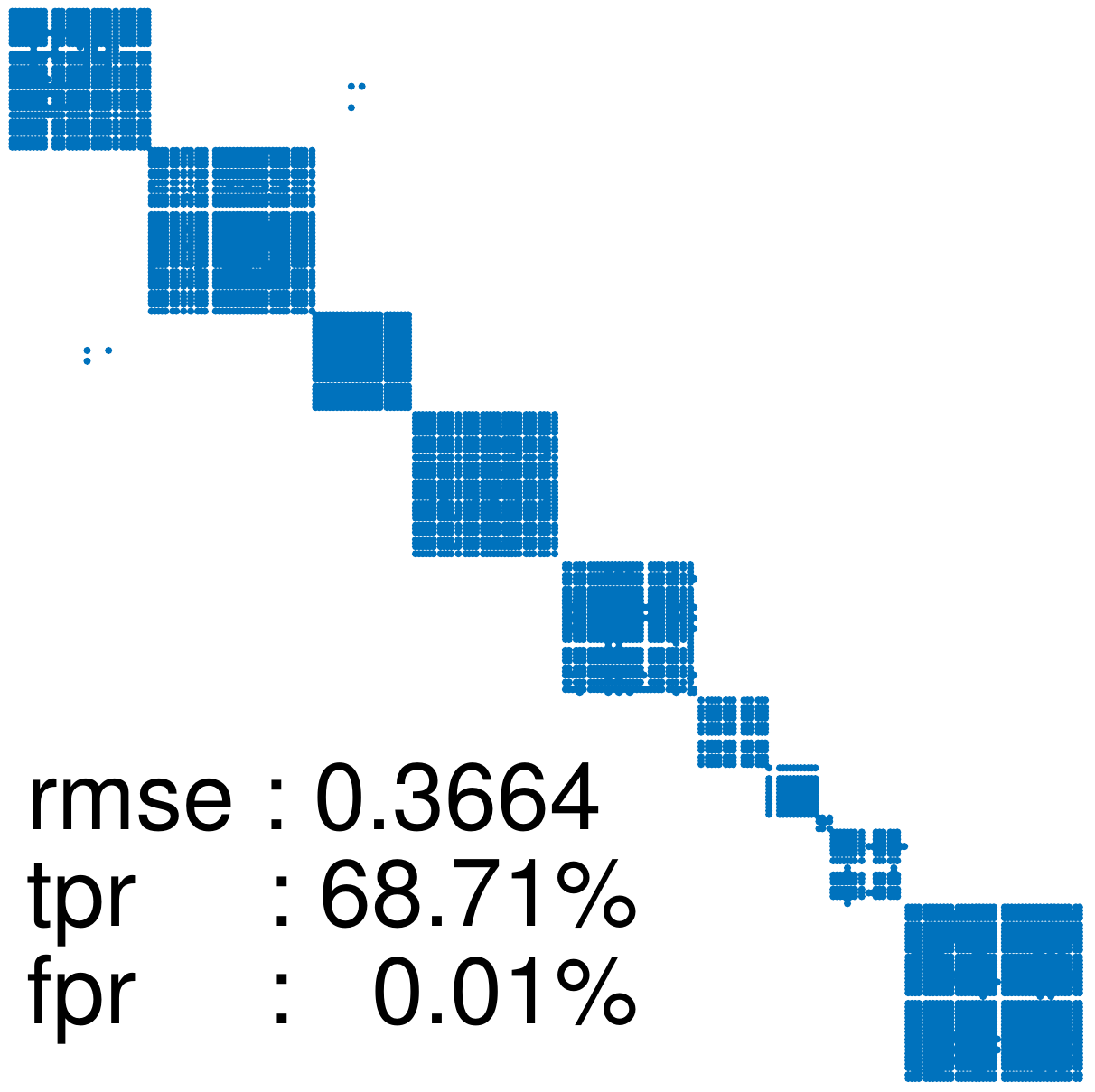}}\\
\end{center}
\caption{Original matrix and reconstruction results for DR, ADMM and IPD algorithms, for $n=100$ (top) and $n=300$ (bottom).}
\label{fig:qualcomp}
\end{figure}

\autoref{fig:qualcomp} illustrates the quality of the recovered covariance matrices when setting $\varepsilon=10^{-10}$. Three different indicators for estimation quality are provided, namely the \emph{true positive rate }(\verb"tpr"), i.e. the correctly recognized non--zero entries,  the \emph{false positive rate} (\verb"fpr"), i.e. the entries erroneously added to the support of the matrix, and the \emph{relative mean square error} (\verb"rmse"), computed as $\|\vY_{\rm rec}-\vY^*\|_{\rm F}^2/\|\vY^*\|_{\rm F}^2$, with $\vY_{\rm rec}$ the recovered matrix. Note that the two first measurements are employed when the main interest lies in the recovery of the matrix support. A visual inspection shows that the three methods provide similar results in terms of matrix support estimation. Moreover, the reconstruction error as well as the values of \verb"fpr" and \verb"tpr" slightly differ. 

\begin{table}[htbp]
\caption{Comparison in terms of convergence speed between DR, ADMM and IPD procedures. The enlighten times refer to the shortest ones.}
\label{tab:perfcomp}
\begin{center}
\small

\begin{tabular}{l | r@{\,}r  | r@{\,}r  | r@{\,}r  || r@{\,}r |  r@{\,}r | r@{\,}r}
& \multicolumn{6}{l||}{\tiny$n = 100$, $\mu_0=0.2,\mu_1=0.1$,$r =5$} & \multicolumn{5}{l}{\tiny$n = 300$, $\mu_0=0.01,\mu_1=0.12$}\\
& \multicolumn{6}{l||}{\tiny$\{r_j\}=\{14, 36,18,10, 22\}$} & \multicolumn{6}{l}{\tiny$r =10$, $\{r_j\}=\{39,    46,    27,    42,    39,    19,    14 ,    4    ,21 ,   49\}$}\\
\midrule
& \multicolumn{2}{c|}{DR} & \multicolumn{2}{c|}{ADMM}  & \multicolumn{2}{c||}{IPD} &\multicolumn{2}{c|}{DR} & \multicolumn{2}{c|}{ADMM}  & \multicolumn{2}{c}{IPD} \\
\toprule
\multicolumn{1}{c|}{$\varepsilon$} & \multicolumn{2}{c|}{Time(iter)} &  \multicolumn{2}{c|}{Time(iter)} & \multicolumn{2}{c||}{Time(iter)}  & \multicolumn{2}{c|}{Time(iter)} &  \multicolumn{2}{c|}{Time(iter)} & \multicolumn{2}{c}{Time(iter)}  \\
\midrule

$10^{-6}$ 	& 0.03&(23)  				& \textbf{0.02}&(17) 		& 0.18&(167)		& 0.14&(17)  				& \textbf{0.11}&(14) 					& 1.34		&(170)		\\
$10^{-7}$ 	& 0.03&(27)  					& \textbf{0.02}&(21) 		& 0.58&(533)		& \textbf{0.32}&(38)  	& 0.34&(42) 					& 4.35		&(548)		\\
$10^{-8}$ 	& \textbf{0.03}&(30)  	& 0.04&(34) 					& 1.83&(685)		& \textbf{0.81}&(95)  	& 0.91&(115) 				& 13.72	&(1748)	\\
$10^{-9}$ 	& \textbf{0.06}&(56)  	& \textbf{0.06}&(54) 	& 2.16&(2000)		& \textbf{1.79}&(211)  & 2.06&(258) 				& 15.70	&(2000)	\\
$10^{-10}$ & \textbf{0.07}&(59)  	& \textbf{0.07}&(58) 	& 2.16&(2000)		& \textbf{5.23}&(620)  & 5.45&(686) 				& 15.68	&(2000)	\\
\midrule
\end{tabular}
\end{center}
\end{table}

\autoref{tab:perfcomp} presents the comparative performance of the algorithms in terms of computation time (in second) and iteration number (averaged on 20 noise realizations), for two scenarios corresponding to distinct problem sizes and block distributions. It can be observed that the behaviors of ADMM and DR are similar, while IPD requires more iterations and time to reach the same precision. Furthermore, the latter fails to reach a high precision in the  allowed maximum number of iterations, for both examples.

\subsection{Application to Robust Graphical Lasso}
\label{sub:noisy}
Let us now illustrate the applicability of the MM approach presented in \autoref{sub:algo} to the problem of precision matrix estimation introduced in \eqref{eq:prec}. 
The test datasets have been generated by using the code available at \url{http://stanford.edu/ ̃boyd/papers/admm/covsel/ covsel_example.html}. A sparse precision matrix $\vC^*$ of dimension $n \times n$ is randomly created, where the number of non--zero entries is chosen as a proportion $p\in ]0,1[$ of the total number $n^2$. Then, $N$ realizations $(\vx^{(i)})_{1\le i\le N}$ of a Gaussian multivalued random variable with zero mean and covariance $\vY^* = (\vC^*)^{-1}$ are generated. Gaussian noise with zero mean and covariance $\sigma^2 \Id$, $\sigma>0$, is finally added to the $\vx^{(i)}$'s, so that the covariance matrix $\bm\Sigma$ associated with the input data reads as in \eqref{eq:xcor} with $\vA = \Id$. As explained in \autoref{sub:model}, the estimation of $ \vC^*$ can be performed by using the MM algorithm from \autoref{sub:algo} based on the minimization of the nonconvex cost \eqref{eq:prec} with regularization functions $g_1 = \mu_1\|\cdot\|_1$, $\mu_1>0$, and $(\forall \vC \in \smp)$ $g_0(\vC)=\mu_0\Sp{1}{\vC^{-1}}$, $\mu_0>0$. The computation of $\textnormal{prox}_{\gamma\lp \varphi + \psi\rp}$ with $\gamma \in ]0,+\infty[$ related to this particular choice for $g_0$ and 
function $\varphi$ given by \eqref{eq:spectralnewlog} and \eqref{eq:u} leads to the search of the only positive root of a polynomial of degree 4.

A synthetic dataset of size $n=100$ is created, where matrix $\vC^*$ has 20 off-diagonal non-zero entries (i.e., $p=10^{-3}$) and the corresponding covariance matrix has condition number 0.125. $N=1000$ realizations are used to compute the empirical covariance matrix $\vS$.  In our MM algorithm, the inner stopping criterion (line \ref{s:stopalg2} in Algorithm \ref{al:MMDR}) is based on the relative difference of majorant function values with a tolerance of $10^{-10}$, while the outer cycle is stopped when the relative difference of the objective function values falls below $10^{-8}$. The DR algorithm is used to solve the inner subproblems, by using parameters $(\forall \ell)$ $\gamma_\ell=1$, $(\forall k)$ $\alpha_{\ell,k}=1$ (see Algorithm \ref{al:MMDR}, lines~\ref{e:startDRalg2}--\ref{e:stopDRalg2}). The allowed  maximum inner (resp. outer) iteration number is 2000 (resp. 20). The quality of the results is quantified in terms of \verb"fpr" on the precision matrix and \verb"rmse" with respect to the true covariance matrix. The parameters $\mu_1$ and $\mu_0$ are set in order to obtain the best reconstruction in terms of \verb"rmse". For eight values of the noise standard deviation $\sigma$, \autoref{fig:sigma8} illustrates the reconstruction quality (averaged on $20$ noise realizations)  obtained with our method, as well as two other approaches that do not take into account the noise in their formulation, namely the classical GLASSO approach from \cite{Boyd:2011:DOS:2185815.2185816}, which amounts to solve \eqref{eq:prob} with $f=-\log\det, \,g=\mu_1\|\cdot\|_1$, and the DR approach described in \autoref{sec:DR}, in the formulation given by \eqref{eq:prob} with  $f=-\log\det$, $(\forall \vC \in \smp)$ $g(\vC)=\mu_0\Sp{1}{\vC^{-1}}+\mu_1\|\vC\|_1$. For the DR approach, 
$\textnormal{prox}_{\gamma\lp \varphi + \psi\rp}$ with $\gamma \in ]0,+\infty[$ is given by the fourth line of Table \ref{tab:logdet} (when $p=1$).
\begin{figure}[htbp]
\begin{center}
\subfloat[Behaviour of rmse wrt $\sigma$.]{\label{fig:sigma8a}\includegraphics[scale=0.3]{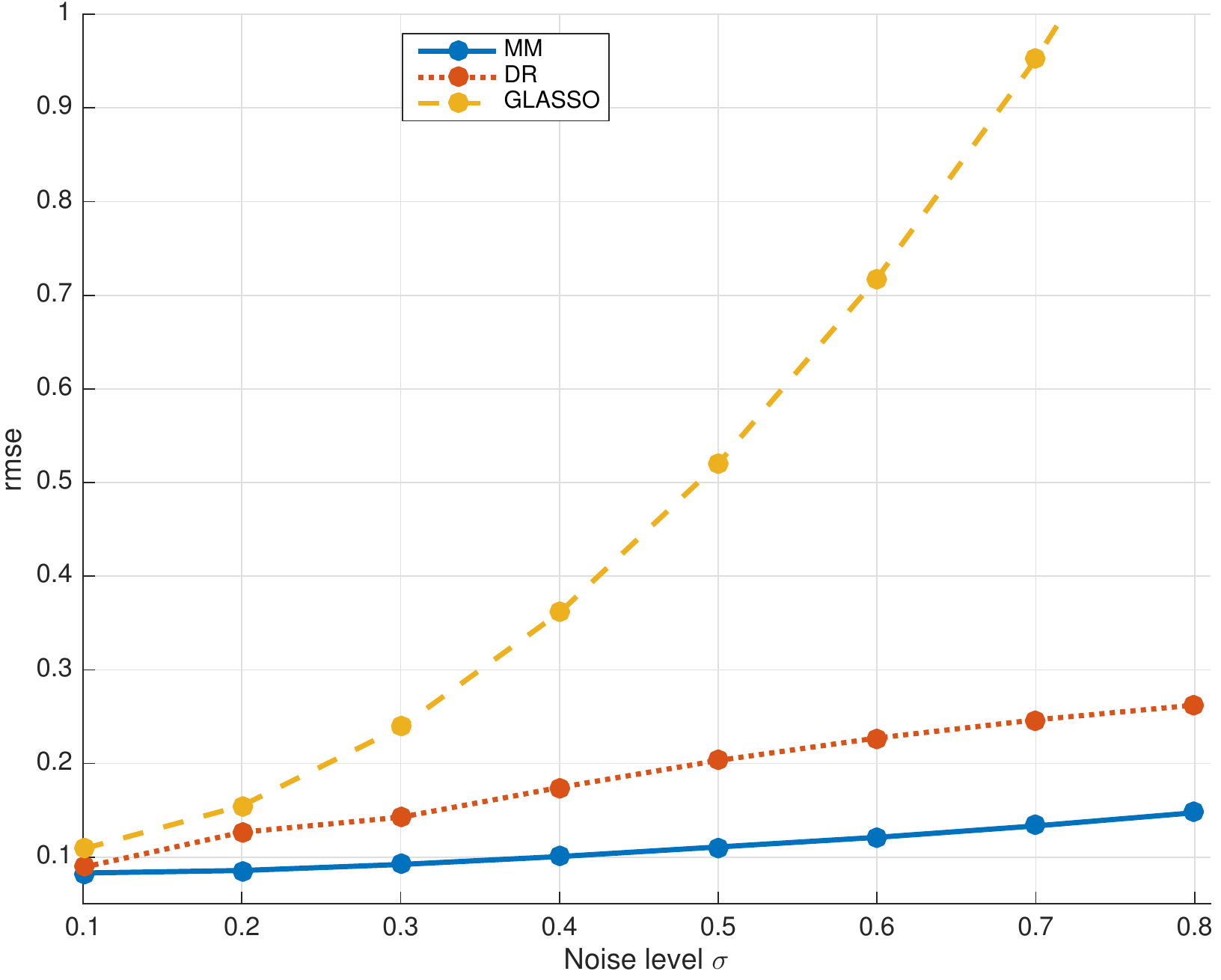}}\qquad \subfloat[Behaviour of fpr wrt $\sigma$.]{\label{fig:sigma8b}\includegraphics[scale=0.3]{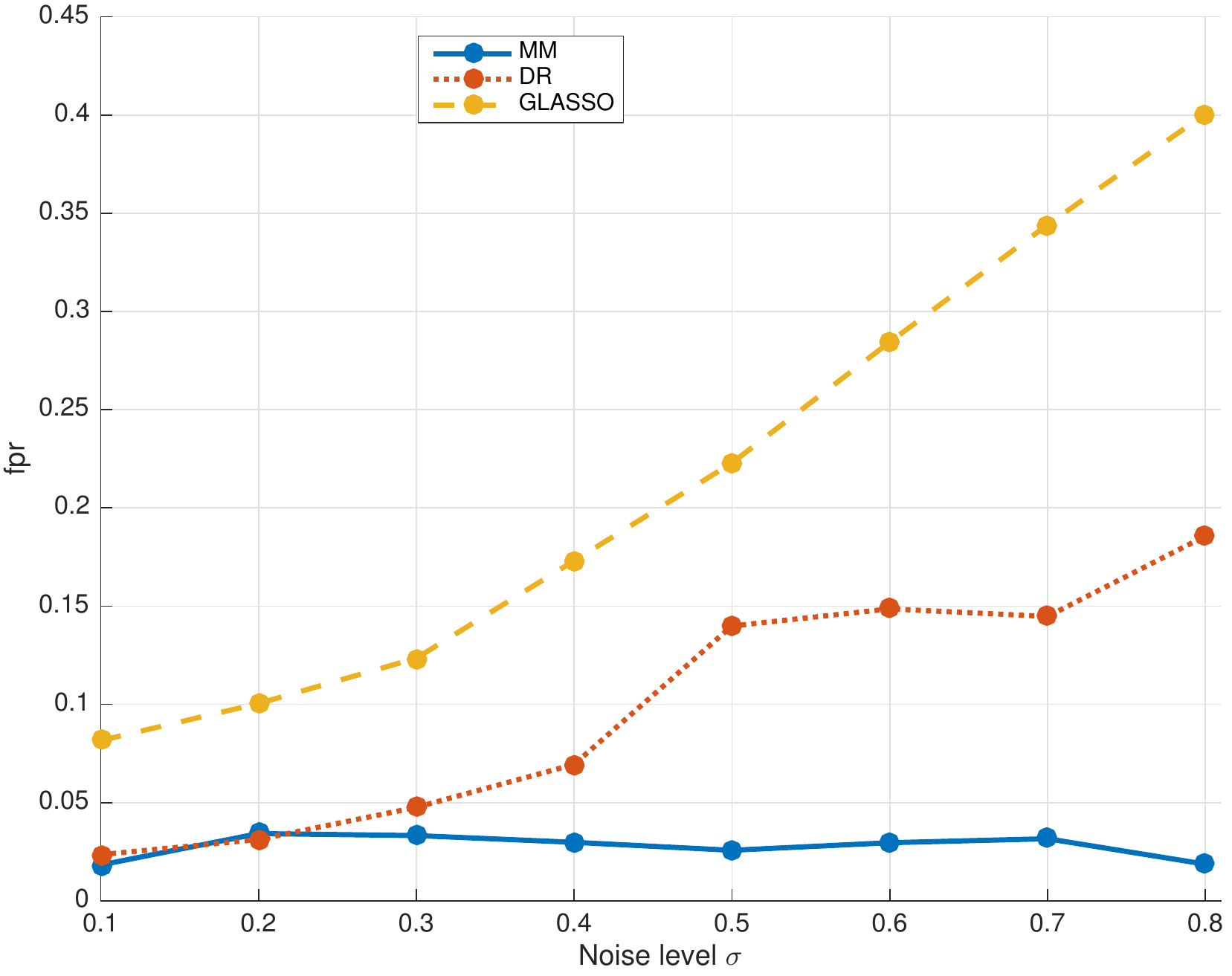}}\qquad 
\end{center}
\caption{Estimation results for different noise levels in terms of \texttt{rmse} (left) and \texttt{fpr} (right) for MM, GLASSO and DR approaches.}
\label{fig:sigma8}
\end{figure}
As expected, as the noise variance increases the reconstruction quality deteriorates. The GLASSO procedure is strongly impacted by the presence of noise, whereas the MM approach achieves better results, also when compared with DR algorithm. Moreover, the MM algorithm significantly outperforms both other methods in terms of support reconstruction, revealing itself very robust with respect to an increasing level of noise.

\section{Conclusions}
\label{sec:conclusions}
In this work, various proximal tools have been introduced to deal with optimization problems involving real symmetric matrices. 
We have focused on the variational framework \eqref{eq:prob} which is closely 
related to the computation of a proximity operator with respect to a Bregman divergence.
It has been assumed that $f$ in \eqref{eq:breg} is a convex spectral function, and $g$ reads as $g_0+g_1$, where $g_0$ is a spectral function. We have given a fully spectral solution in \autoref{sec:Dec} when $g_1\equiv 0$, and, in particular, \autoref{co:proxdiv} could be useful for developing algorithms involving proximity operators in other metrics than the Frobenius one. When $g_1\not \equiv 0$, a proximal iterative approach has been presented, which is  grounded on the use of the Douglas--Rachford procedure.
As illustrated by the tables of proximity operators provided for a wide range of choices for $f$ and $g_0$, the main advantage of the proposed algorithm is its great flexibility.
The proposed framework also has allowed us to propose a nonconvex formulation of the 
precision matrix estimation problem arising in the context of noisy graphical lasso.
The nonconvexity of the obtained objective function has been cirmcumvented through 
a Majorization--Minimization approach, each step of which consists of solving a convex problem by a Douglas-Rachford sub-iteration.

Comparisons with state--of--the--art solutions have demonstrated the robustness of the proposed method.

It is worth mentioning that all the results presented in this paper can be easily extended to  complex Hermitian matrices.

\bibliographystyle{spmpsci}      
\bibliography{abbr,biblio}

\end{document}